\newtheorem{theorem}{Theorem}[section]
\newaliascnt{lemma}{theorem}
\newtheorem{lemma}[lemma]{Lemma}
\newaliascnt{corollary}{theorem}
\newtheorem{corollary}[corollary]{Corollary}
\newaliascnt{proposition}{theorem}
\newtheorem{proposition}[proposition]{Proposition}
\newaliascnt{potato}{theorem}
\newaliascnt{definitionlemma}{theorem}
\newaliascnt{conjecture}{theorem}
\newaliascnt{claim}{theorem}
\newtheorem{claim}[claim]{Claim}
\newaliascnt{question}{theorem}
\newtheorem{question}[question]{Question}
\theoremstyle{definition}
\newaliascnt{definition}{theorem}
\newtheorem{definition}[definition]{Definition}
\newaliascnt{remark}{theorem}
\newtheorem{remark}[remark]{Remark}
\newaliascnt{example}{theorem}
\newenvironment{example}
  {\pushQED{\qed}\examplex}
  {\popQED\endexamplex}
\newaliascnt{notation}{theorem}
\definecolor{darkblue}{rgb}{0.6,0,0.1}
\tikzset{>=stealth',
  head/.style = {fill = white, text=black},
  plaque/.style = {draw, rectangle, minimum size = 10mm}, 
  pil/.style={->,thick},
  junct/.style = {draw,circle,inner sep=0.5pt,outer sep=0pt, fill=black}
  }
\newcommand{\PP}{\mathbb{P}}
\newcommand{\ZZ}{\mathbb{Z}}
\newcommand{\QQ}{\mathbb{Q}}
\newcommand{\NN}{\mathbb{N}}
\newcommand{\CC}{\mathbb{C}}
\newcommand{\LL}{\mathbb{L}}
\newcommand{\bL}{\mathbb{L}}
\newcommand{\bA}{\mathbb{A}}
\newcommand{\cX}{\mathcal{X}}
\newcommand{\cY}{\mathcal{Y}}
\newcommand{\cB}{\mathcal{B}}
\newcommand{\cP}{\mathcal{P}}
\newcommand{\calT}{\mathcal{T}}
\newcommand{\frakp}{\mathfrak{p}}
\DeclareMathOperator{\age}{age}
\DeclareMathOperator{\Conj}{Conj}
\DeclareMathOperator{\tors}{tors}
\DeclareMathOperator{\SL}{SL}
\DeclareMathOperator{\e}{e}
\DeclareMathOperator{\Spec}{Spec}
\newif\ifhascomments \hascommentstrue
  \newcommand{\matt}[1]{{\color{red}[[\ensuremath{\spadesuit\spadesuit\spadesuit} #1]]}}
  \newcommand{\dori}[1]{{\color{blue}[[\ensuremath{\clubsuit\clubsuit\clubsuit} #1]]}}
  \newcommand{\elden}[1]{{\color{blue}[[\ensuremath{\clubsuit\clubsuit\clubsuit} #1]]}}
   \newcommand{\evan}[1]{{\color{red}[[\ensuremath{\natural\natural\natural} #1]]}}
  \newcommand{\elden}[1]{}
  \newcommand{\dori}[1]{}
  \newcommand{\matt}[1]{}
\begin{document}

\title{$K$-equivalence and integral cohomology}

\author{Matthew~Satriano}
\thanks{MS was partially supported by a Discovery Grant from the
  National Science and Engineering Research Council of Canada.}
\address[MS]{Department of Pure Mathematics, University
  of Waterloo, Waterloo ON N2L3G1, Canada}
\email{msatrian@uwaterloo.ca}

\author{Evan~Sundbo}
\thanks{}
\address[ES]{Department of Pure Mathematics, University
  of Waterloo, Waterloo ON N2L3G1, Canada}
\email{e2sundbo@uwaterloo.ca}

\date{\today}
\keywords{}
\subjclass[2020]{}

\begin{abstract} 
We introduce an integral version of the Hodge polynomial, which encodes the integral cohomology of smooth projective varieties. We prove it extends to a function which is well-defined on the Grothendieck ring of varieties and we obtain as a consequence that $K$-equivalent smooth projective varieties have isomorphic integral cohomology groups.
\end{abstract}

\maketitle

\tableofcontents

\numberwithin{theorem}{section}
\numberwithin{lemma}{section}
\numberwithin{corollary}{section}
\numberwithin{proposition}{section}
\numberwithin{conjecture}{section}
\numberwithin{question}{section}
\numberwithin{remark}{section}
\numberwithin{definition}{section}
\numberwithin{example}{section}
\numberwithin{notation}{section}
\numberwithin{equation}{section}

\section{Introduction}

% There are various extant invariants capturing information about cohomology with field coefficients of varieties and more generally, stacks. When well-defined on the Grothendieck ring of varieties $K_0(\text{Var}_\CC)$ (for example the virtual Poincar\'e or $E$-polynomial) these constructions have proven to be convenient tools: recall in particular 

In 1995, Kontsevich \cite{Kon95} proved that $K$-equivalent smooth projective varieties have the same Hodge numbers. In this brief note, we extend his result to integral cohomology.

\begin{theorem}\label{thm:HZ-k-equivalence}
    If $X$ and $X'$ are $K$-equivalent smooth projective varieties over $\CC$ then $H^i(X,\ZZ)\simeq H^i(X',\ZZ)$ for all $i$.
\end{theorem}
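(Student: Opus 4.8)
The plan is to manufacture a ring-valued invariant of smooth projective varieties that remembers the full isomorphism type of each integral cohomology group, to show that it factors through the Grothendieck ring of varieties, and then to feed in the motivic-integration fact that $K$-equivalent smooth projective varieties have equal class there. The essential design choice is that ordinary $K$-theory kills torsion (the sequence $0\to\ZZ\xrightarrow{n}\ZZ\to\ZZ/n\to 0$ forces $[\ZZ/n]=0$), so instead I would work with the \emph{split} Grothendieck group $G$ of finitely generated abelian groups, i.e. the group completion of the monoid of isomorphism classes under $\oplus$. By the structure theorem and Krull--Schmidt this monoid is free commutative, so $G$ is free on $\ZZ$ and the $\ZZ/p^k$; crucially, cancellation holds for finitely generated abelian groups, so $[A]=[B]$ in $G$ already forces $A\cong B$.

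For a smooth projective $X$ I set $E_{\ZZ}(X)=\sum_i [H^i(X,\ZZ)]\,t^i$ in the polynomial ring $\cR:=G[t]$, where I equip $\cR$ with the product determined by $[A]t^p\cdot[B]t^q = [A\otimes B]\,t^{p+q} + [\Tor(A,B)]\,t^{p+q-1}$. This is exactly the class of the derived tensor product placed in the correct cohomological degrees; it is well defined because $\ZZ$ is hereditary, so every bounded complex of finitely generated abelian groups splits as the direct sum of its shifted cohomologies. With this product the (non-canonically split) integral Künneth isomorphism becomes precisely $E_{\ZZ}(X\times Y)=E_{\ZZ}(X)\,E_{\ZZ}(Y)$, and checking that $\cR$ is genuinely a commutative ring is a piece of bookkeeping I would dispatch first.

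The heart of the argument is extending $E_{\ZZ}$ to a homomorphism on $K_0(\Vari_{\CC})$. Rather than confront the non-split long exact sequence of a closed--open decomposition, where torsion genuinely obstructs naive additivity, I would invoke Bittner's presentation: $K_0(\Vari_{\CC})$ is generated by classes of smooth projective varieties subject only to $[\emptyset]=0$ and the blow-up relation $[Bl_Y X]-[E]=[X]-[Y]$ for $Y\subset X$ a smooth closed subvariety with exceptional divisor $E$. Thus it suffices to check that $E_{\ZZ}$ respects this single relation, and here integral cohomology behaves perfectly: the blow-up formula $H^n(Bl_Y X,\ZZ)\cong H^n(X,\ZZ)\oplus\bigoplus_{i=1}^{c-1}H^{n-2i}(Y,\ZZ)$ and the projective bundle formula $H^n(E,\ZZ)\cong\bigoplus_{i=0}^{c-1}H^{n-2i}(Y,\ZZ)$, with $c=\operatorname{codim}Y$, are honest direct-sum decompositions over $\ZZ$, with no extension problem. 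A one-line computation then yields $E_{\ZZ}(Bl_Y X)-E_{\ZZ}(E)=E_{\ZZ}(X)-E_{\ZZ}(Y)$, so $E_{\ZZ}$ descends. This is the step I expect to carry the weight: the whole point is that Bittner's smooth-projective presentation converts a question about non-split exact sequences into one about the split blow-up formula.

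Finally I would extend $E_{\ZZ}$ across the localization and completion in which motivic integration lives. Since $E_{\ZZ}(\bL)=[\ZZ]\,t^2$ is a unit in $\cR$ (its inverse is $[\ZZ]t^{-2}$, as $[\ZZ]$ is the identity), $E_{\ZZ}$ passes to $K_0(\Vari_{\CC})[\bL^{-1}]$, where multiplication by $\bL^{-1}$ merely shifts the $t$-degree by $-2$; completing with respect to dimension corresponds on the target to allowing formal series whose $t$-degrees are bounded above, so $E_{\ZZ}$ extends continuously to $\widehat{K_0(\Vari_{\CC})[\bL^{-1}]}$. I then import the standard consequence of motivic integration that $K$-equivalent smooth projective varieties have equal class in this completed ring, whence $E_{\ZZ}(X)=E_{\ZZ}(X')$. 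Both sides are honest polynomials in $t$, since cohomology is concentrated in degrees $0$ through $2\dim X=2\dim X'$, so they cannot be identified by completion; comparing coefficients gives $[H^i(X,\ZZ)]=[H^i(X',\ZZ)]$ in $G$ for every $i$, and cancellation in $G$ delivers $H^i(X,\ZZ)\cong H^i(X',\ZZ)$. The only residual risk here is formal, namely pinning down the completion on the target and its continuity, rather than geometric.
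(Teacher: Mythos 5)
Your proof is correct, and it shares the paper's overall skeleton: a multiplicative invariant recording integral cohomology, descent to $K_0(\Var_\CC)$ via Bittner's presentation together with the split blow-up and projective-bundle formulas over $\ZZ$, localization at $\LL$ with a degree-versus-dimension-filtration argument, and finally Kontsevich's equality $\{X\}=\{X'\}$ in $\widehat{K_0(\Var_\CC)[\LL^{-1}]}$. Where you genuinely diverge is in what the paper itself identifies as its main technical point: the design of the value ring making the invariant multiplicative. The paper hand-builds the explicit quotient ring $R$ with relations $tx^2=tx-x$, $ux=vx=tx$, and idempotents $s_\frakp, r_j$ to simulate the degree shift of $\Tor$, and must then verify the computational identity $a_{\frakp,j}(A\otimes B)=a_{\frakp,j}(A)a_{\frakp,j}(B)$ (Proposition \ref{a_facts}) and a basis/linear-independence argument (Proposition \ref{prop:recoverH(X,Z)}) to know the invariant determines $H^i(X,\ZZ)$. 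You instead take the split Grothendieck group $G$ of finitely generated abelian groups with the derived-tensor product on $G[t]$: associativity and the K\"unneth multiplicativity become conceptual consequences of the splitting of bounded complexes over the hereditary ring $\ZZ$, and recovery of the groups is automatic from cancellation and Krull--Schmidt, with no basis computation. What your coarser invariant gives up is the Hodge-theoretic refinement: the paper's $u,v$ variables record $h^{p,q}$, which feed its further applications (torsion-freeness under cell decompositions, the stringy and stacky versions), but for the stated theorem your invariant suffices. Two formal points to tidy: $E_\ZZ(\LL)^{-1}=[\ZZ]t^{-2}$ lives in the Laurent ring $G[t,t^{-1}]$, not in $G[t]$, so say explicitly that you pass to Laurent polynomials; and at the end you do not need a continuous extension to the full completion --- as in the paper's Lemma \ref{factors}, it is enough to observe that any $\alpha\in\bigcap_m F^mK_0(\Var_\CC)[\LL^{-1}]$ has $E_\ZZ(\alpha)$ of degree at most $-2m$ for every $m$, hence $E_\ZZ(\alpha)=0$ since the degree filtration on $G[t,t^{-1}]$ is separated, whence $E_\ZZ$ factors through the image $\overline{K_0(\Var_\CC)[\LL^{-1}]}$ and your coefficient comparison applies verbatim.
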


We prove this theorem by introducing an \emph{integral virtual Hodge function} $H_{\text{vir},\ZZ}$. It is well-defined on the Grothendieck ring of varieties $K_0(\text{Var}_\CC)$ and has the property that when $X$ is a smooth projective variety, the integral Hodge structure, and in particular the integral cohomology groups, of $X$ can be read off from $H_{\text{vir},\ZZ}(X)$. 

%As we are not aware of any version of the above invariants which uses integral cohomology (although do note the existence of the \emph{Ekedahl invariants} of a finite group $G$ as introduced in \cite{Eke09}, which encode integral cohomological information of the classifying stack $\mathcal{B} G$  but are not well-defined on $K_0(\text{Stack}_k)$ in general), we introduce in this note an \emph{integral virtual Hodge  function} $H_{\text{vir},\ZZ}$. It is well-defined on $K_0(\text{Var}_\CC)$ and has the property that when $X$ is a smooth projective variety, the integral Hodge structure and in particular the integral cohomology groups of $X$ can be read off from $H_{\text{vir},\ZZ}(X)$.

The main technical point in constructing $H_{\text{vir},\ZZ}$ is defining the ring where it takes values in such a way that $H_{\text{vir},\ZZ}$ is multiplicative, i.e., that $$H_{\text{vir},\ZZ}(X\times Y) = H_{\text{vir},\ZZ}(X)H_{\text{vir},\ZZ}(Y).$$ 

\begin{definition}
Given a smooth projective complex variety $X$, let the \emph{integral Hodge function} $$H_{\ZZ}(X,s_\frakp,r_j,u,v,t,x) =\sum_{\substack{i,j\geq 0 \\ \frakp\text{ prime}}}(-1)^ia_{\frakp,i,j}(X)s_\frakp r_jt^ix  +\sum_{p,q,i\geq0}(-1)^ih^{p,q}(X)u^pv^q,$$
where 
\[
a_{\frakp,i,j}(X) = \text{rk}_{\ZZ/\frakp\ZZ}\left(\frac{\frakp^j H^i(X,\ZZ)_\text{tors}}{\frakp^{j+1} H^i(X,\ZZ)_\text{tors}}\right)
\]
and $h^{p,q}(X)$ are the usual Hodge numbers of $X$. This function takes values in the ring
 \[
R:=\frac{\ZZ[s_\frakp,r_j,u,v,t,x: \frakp\text{ prime},\,j\geq0]}{\langle tx^2=tx-x, ux=vx=tx, s_\frakp^2 = s_\frakp, r_j^2 = r_j, s_\frakp s_{\frakp'}=r_jr_{j'}=0\text{ for }\frakp\neq\frakp', j\neq j'    \rangle}.
\]
\end{definition}

We extend $H_\ZZ$ to the virtual integral Hodge function $H_{\text{vir},\ZZ}$ in Proposition \ref{checks}, which takes values on complex varieties which are not necessarily smooth or projective. We discuss the definition of the integral Hodge function and some of its basic properties in Section \ref{first-section}. Subsection \ref{section:checks} is devoted to proving that $H_\ZZ$ extends to a function $H_{\text{vir},\ZZ}$ on the Grothendieck ring of varieties. As a quick application, we give a new proof of the fact that if a smooth projective variety admits an algebraic cell decomposition (for example if $X$ admits a nontrivial $\CC^*$ action with only finitely many fixed points, per \cite{BB73}) then its integral cohomology is torsion-free. We then use the existence of $H_{\text{vir},\ZZ}$ in Section \ref{section:HZ-k-equivalence} to prove our Theorem \ref{thm:HZ-k-equivalence}.

As a corollary of Theorem \ref{thm:HZ-k-equivalence}, one finds that if $X$ is a projective algebraic variety of dimension $n$ with at worst Gorenstein canonical singularities admitting a crepant resolution of singularities $Y\to X$, then the integral cohomology groups of $Y$ are independent of the choice of crepant resolution.

Lastly, let us note that $H_{\text{vir},\ZZ}$ also admits further generalizations, such as to a \emph{stringy} version by making use of the Gorenstein measure \cite{DL02} (as is done for non-integral invariants in e.g. Section 7.3.4 of \cite{CLNS18}). One may also define a virtual integral \emph{stacky} version by extending $H_{\text{vir},\ZZ}$ to the Grothendieck group of stacks introduced by Ekedahl \cite{Eke25}. % showed is isomorphic to the localization of $K_0(\text{Var}_\CC)$ at both $\LL$ and $(\LL^i-1)$ for all $i$.\\

% \begin{equation}\label{d:string-EZ}
%     H_{\text{str},\ZZ}(X) := H_{\text{vir},\ZZ}\left(\int_{J_\infty(X)}d\mu^{\text{Gor}}_X\cdot\LL^n\right)
% \end{equation}
 After the first version of this note was posted to the arXiv we were made aware of \cite{GS96} and \cite{XY18}, in which Theorem \ref{thm:HZ-k-equivalence} was previously shown to be true.

\emph{Acknowledgements.} We thank Ted Chinburg for asking the question whether or not Theorem \ref{thm:HZ-k-equivalence} holds. We are also indebted to Peter Angelinos, Donu Arapura, Dori Bejleri, Xuemiao Chen, Michael Groechenig, Jiahui Huang, Christian Liedtke, David McKinnon, and especially Jeremy Usatine for many interesting discussions. Finally we thank Qizheng Yin for drawing our attention to the references \cite{GS96} and \cite{XY18}.

\section{Integral Virtual Hodge Functions}\label{first-section}

\begin{definition}
    Let $X$ be a smooth projective complex variety. The \emph{torsion Poincar\'e function} of $X$ is
    \[
    \calT(X) =  \sum_{\substack{i,j\geq 0 \\ \frakp\text{ prime}}}(-1)^ia_{\frakp,i,j}(X)s_\frakp r_jt^ix 
    \]
    lying in the ring
    \[
S:=\frac{\ZZ[s_\frakp,r_j,t,x: \frakp\text{ prime},\,j\geq0]}{\langle tx^2=tx-x, s_\frakp^2 = s_\frakp, r_j^2 = r_j, s_\frakp s_{\frakp'}=r_jr_{j'}=0\text{ for }\frakp\neq\frakp', j\neq j'    \rangle}
\]
    defined by
\[
a_{\frakp,i,j}(X) = \text{rk}_{\ZZ/\frakp\ZZ}\left(\frac{\frakp^j H^i(X,\ZZ)_\text{tors}}{\frakp^{j+1} H^i(X,\ZZ)_\text{tors}}\right).
\]
\end{definition}

This invariant encodes torsion in the integral cohomology groups of $X$, which can be recovered (see Proposition \ref{prop:recoverH(X,Z)}) as 
\[
H^i(X,\ZZ)_{\text{tors}}\simeq\bigoplus_{\frakp\text{ prime}}\bigoplus_{j\geq 1}\left(\ZZ/\frakp^j\ZZ\right)^{a_{\frakp,i,j-1}(X)-a_{\frakp,i,j}(X)}
\]

Note however that $\calT$ is not multiplicative, in that we do not necessarily have the equality $\calT(X\times Y) = \calT(X)\calT(Y).$ This is due to the possibility of the free parts of the cohomology groups interacting with the torsion parts. To obtain an invariant that captures torsion information and behaves in  the expected ways, we need to keep track of the free part of the cohomology groups as well.

\begin{definition} Let $X$ be a smooth projective complex variety. The \emph{integral Hodge function} of $X$ is
    \begin{equation}\label{H_Z_def}
    H_{\ZZ}(X,s_\frakp,r_j,t,x,u,v) =\calT(X) +\sum_{p,q\geq0}(-1)^{p+q}h^{p,q}(X)u^pv^q
    \end{equation}
    lying in the ring
    \[
R:=\frac{S\otimes_\ZZ\ZZ[u,v]}{\langle ux=vx=tx\rangle}
\]
where $h^{p,q}(X)$ are the usual Hodge numbers of $X$. We frequently denote $H_{\ZZ}(X,s_\frakp,r_j,t,x,u,v)$ simply as $H_{\ZZ}(X)$.
\end{definition}

We introduce the following definition; we prove it is well-defined and recovers the integral cohomology in Proposition \ref{prop:recoverH(X,Z)}.

\begin{definition}\label{def:reconstruction-map}
Denote by $R^+\subset R$
$$\bigg\{\sum_{\substack{0\leq i\leq n \\ j\geq 0,\,\frakp\text{ prime}}}(-1)^ia_{\frakp,i,j}s_\frakp r_jt^ix  +\sum_{0\leq p,q\leq n}(-1)^{p+q}h^{p,q}u^pv^q\in R \,\big|\,n\geq 0, \, h^{p,q}\geq0\text{, and }a_{\frakp,i,j-1}\geq a_{\frakp,i,j}\geq 0 \bigg\}.$$
For $\alpha\in R^+$ with notation as above, let
$$\phi_i(\alpha) = \bigoplus_{p,q\geq 0}\ZZ^{h^{p,q}}\oplus\bigoplus_{\frakp\text{ prime}}\bigoplus_{j\geq 1}\left(\ZZ/\frakp^j\ZZ\right)^{a_{\frakp,i,j-1}-a_{\frakp,i,j}}.$$
\end{definition}

% The only nontrivial part of showing that $R^+$ is a subring of $R$ is to check  that the coefficient of $s_\frakp r_jt^ix$ in $\beta=\alpha\alpha'$ obeys the relationship $b_{\frakp,i,j-1}\geq b_{\frakp,i,j}\geq 0$. For this, write $b_{\frakp,i,j}$ as
% \[(-1)^i\left( \sum_{p+q+c=i}\left(h^{p,q}a'_{\frakp,c,j} + a_{\frakp,c,j}h'^{p,q}\right)+\sum_{\substack{c+d=i}}a_{\frakp,c,j}a'_{\frakp,d,j}+\sum_{c+d=i+1}a_{\frakp,c,j}a'_{\frakp,d,j}\right)\]
% (\emph{cf.} the proof of Part $(3)$ of Proposition \ref{checks}),  each term of which obeys the desired condition.

\begin{proposition}\label{prop:recoverH(X,Z)}
The maps $\phi_i$ are well-defined and for $X$ smooth and projective we have
\[
\phi_i(H_{\ZZ}(X))\simeq H^i(X,\ZZ).
\]
%The integral cohomology groups of $X$ can be recovered from $H_{\text{vir},\ZZ}(X)$ by 
%\begin{equation}\label{eqn:reconstruction}
%H^i(X,\ZZ) \cong \bigoplus_{p,q\geq 0}\ZZ^{h^{p,q,i}(X)}\oplus\bigoplus_{\frakp\text{ prime}}\bigoplus_{j\geq 1}\left(\ZZ/\frakp^j\ZZ\right)^{a_{\frakp,i,j-1}(X)-a_{\frakp,i,j}(X)}.
%\end{equation}
\end{proposition}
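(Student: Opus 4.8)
The plan is to prove Proposition~\ref{prop:recoverH(X,Z)} in two logically separate parts, matching its two assertions. First I would establish that the maps $\phi_i$ are well-defined, and then, separately, that they recover $H^i(X,\ZZ)$ when $X$ is smooth projective.

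\textbf{Well-definedness of $\phi_i$.} The subtle point is that an element $\alpha\in R^+$ is specified by a formula, but the same element of the quotient ring $R$ might admit more than one expression of the prescribed form, so I must check that $\phi_i(\alpha)$ depends only on $\alpha$ and not on the chosen presentation. The plan is to argue that the coefficients $h^{p,q}$ and $a_{\frakp,i,j}$ appearing in a valid expression are in fact uniquely determined by $\alpha$. To do this I would analyze the relations defining $R$, namely $tx^2=tx-x$, $ux=vx=tx$, $s_\frakp^2=s_\frakp$, $r_j^2=r_j$, $s_\frakp s_{\frakp'}=r_jr_{j'}=0$ for $\frakp\neq\frakp'$, $j\neq j'$. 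The key observation is that the monomials $s_\frakp r_j t^i x$ (indexed by $\frakp,i,j$) and $u^pv^q$ (indexed by $p,q$) occurring in the defining sum of $R^+$ remain $\ZZ$-linearly independent in $R$, because the relations only identify products that lie outside this set of monomials (for instance $ux=vx=tx$ involves the generator $x$ in a way that does not collide with a pure $u^pv^q$ term for $q\geq 1$, and the idempotent and orthogonality relations only kill or collapse products of distinct $s_\frakp$'s or $r_j$'s). I would make this precise by exhibiting an explicit $\ZZ$-basis of the relevant graded piece of $R$, or equivalently by constructing $\ZZ$-linear functionals on $R$ that read off each coefficient $h^{p,q}$ and $a_{\frakp,i,j}$ individually. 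Once the coefficients are canonical, the formula for $\phi_i(\alpha)$ produces a well-defined abelian group, since the exponents $h^{p,q}\geq 0$ and $a_{\frakp,i,j-1}-a_{\frakp,i,j}\geq 0$ are non-negative by the defining conditions of $R^+$.

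\textbf{Recovery of $H^i(X,\ZZ)$.} Here I would first confirm that $H_\ZZ(X)$ genuinely lies in $R^+$: the Hodge numbers satisfy $h^{p,q}\geq 0$, and the required monotonicity $a_{\frakp,i,j-1}(X)\geq a_{\frakp,i,j}(X)\geq 0$ follows directly from the definition $a_{\frakp,i,j}(X)=\rk_{\ZZ/\frakp\ZZ}(\frakp^j H^i_{\tors}/\frakp^{j+1}H^i_{\tors})$, since multiplication by $\frakp$ can only decrease (never increase) this rank. Then I would unwind $\phi_i(H_\ZZ(X))$ using the coefficients extracted above. The free part contributes $\ZZ^{h^{p,q}}$ summands whose total rank is $\sum_{p,q}h^{p,q}=b_i(X)$, the $i$-th Betti number, which equals the rank of $H^i(X,\ZZ)$; I would match these summing over $p+q=i$, using that the correct $i$-graded piece is selected by the exponent $t^i$ and the Hodge decomposition $b_i=\sum_{p+q=i}h^{p,q}$. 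The torsion part contributes $\bigoplus_\frakp\bigoplus_{j\geq 1}(\ZZ/\frakp^j)^{a_{\frakp,i,j-1}-a_{\frakp,i,j}}$, which is exactly the expression for $H^i(X,\ZZ)_{\tors}$ already recorded in the excerpt (following the definition of $\calT$). The decomposition $a_{\frakp,i,j-1}-a_{\frakp,i,j}$ counting the number of $\ZZ/\frakp^j$ summands is the standard structure-theorem computation: if $H^i_{\tors}$ has elementary divisor structure at $\frakp$ with $m_k$ cyclic summands of order $\frakp^k$, then $a_{\frakp,i,j}=\sum_{k>j}m_k$, whence $a_{\frakp,i,j-1}-a_{\frakp,i,j}=m_j$, matching the claimed multiplicity of $\ZZ/\frakp^j$.

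\textbf{Main obstacle.} I expect the principal difficulty to lie entirely in the first part---the canonical recoverability of the coefficients from an element of $R$. The ring $R$ is a quotient by a nontrivial ideal, and the relations mix the variables $t,u,v,x$, so one must verify with care that no collision between the two families of monomials $\{s_\frakp r_j t^i x\}$ and $\{u^p v^q\}$ occurs in $R$. Concretely, the relation $tx^2=tx-x$ together with $ux=vx=tx$ governs how the torsion-tracking variable $x$ interacts with the free-part variables, and I would need to check that these relations do not, for example, allow a term of the form $u^p v^q$ to be rewritten as a combination involving $x$, or vice versa. The cleanest route is to produce an explicit monomial basis for $R$ as a $\ZZ$-module (it suffices to handle the finitely many monomials of each relevant type), from which the uniqueness of the coefficients---and hence the well-definedness of $\phi_i$---follows immediately; the recovery statement is then a formal consequence together with the classification of finitely generated abelian groups.
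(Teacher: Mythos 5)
Your proposal is correct and follows essentially the same route as the paper: the paper likewise reduces everything to exhibiting an explicit $\ZZ$-basis, writing $R$ as the tensor product of $\ZZ[s_\frakp,r_j]/\langle s_\frakp^2=s_\frakp, r_j^2=r_j, s_\frakp s_{\frakp'}=r_jr_{j'}=0\rangle$ (basis $1,s_\frakp,r_j,s_\frakp r_j$) with $\ZZ[u,v,t,x]/\langle tx^2=tx-x,\,ux=vx=tx\rangle$ (basis $u^pv^qt^i$, $t^ix$, and $x^i$ for $i\geq 2$), so that the monomials $s_\frakp r_jt^ix$ and $u^pv^qt^i$ are linearly independent and the coefficients $a_{\frakp,i,j}$ and $h^{p,q}$ are read off uniquely, with the structure-theorem recovery of $H^i(X,\ZZ)$ left implicit exactly as you spell it out. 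Your identification of the coefficient-uniqueness step as the main obstacle matches where the paper concentrates its (brief) argument.
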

\begin{proof}
  Write the ring $R$ as \[
R = \frac{\ZZ[s_\frakp,r_j]}{\langle s_\frakp^2=s_\frakp,r_j = r_j^2, s_\frakp s_{\frakp'}=r_jr_{j'}=0  \rangle}\otimes_\ZZ\frac{\ZZ[u,v,t,x]}{\langle tx^2=tx-x ,ux=vx=tx\rangle}.
\]
Then the first tensor factor has a basis given by $1,s_\frakp,r_j,s_\frakp r_j$. The second tensor factor has a basis given by $u^pv^qt^i$ and $t^ix$ for $p,q,i \geq 0$, as well as $x^i$ for $i\geq2$. As a result, we see the elements $s_\frakp r_jt^ix$ and $u^pv^qt^i$ form a linearly independent set in $R$ allowing us to read off the coefficients $a_{\frakp,i,j}(X)$ and $h^{p,q}$. 
\end{proof}

%In other words, the free part of $H^i(X,\ZZ)$ comes from looking at coefficients of $t^i$ and the torsion part comes from looking at the coefficients of $t^ix$.

%It will be useful to have a name for the the reconstruction map that takes an element of $R$ in the image of $H_{\text{vir},\ZZ}$ and returns the finitely generated group associated to its coefficients with $t$-degree $i$ as in Equation \eqref{eqn:reconstruction}. But note also that not every element of $R$ is recording invariants of some finitely generated $\ZZ$-module, as well as the fact that two different elements of $R$ can correspond to the same finitely generated $\ZZ$-module since $H_{\text{vir},\ZZ}(X)$ encodes the mixed Hodge structure of $X$ as well as the group structure of its integral cohomology.

%With this definition another way of stating Proposition \ref{prop:recoverH(X,Z)} is to say that $H^i(X,\ZZ) = \phi_i(H_{\text{vir},\ZZ}(X))$.\\

\section{Integral cohomology under $K$-equivalance}\label{section:K-equiv}

Our goal in this section is to prove Theorem \ref{thm:HZ-k-equivalence}. Recall that two smooth projective varieties $X$ and $Y$ are said to be \emph{$K$-equivalent} if there exists a smooth projective variety $Z$, dominating both $X$ and $Y$, birational to both $X$ and $Y$, and with $K_{Z/X} = K_{Z/Y}$.  

In this first subsection below, we prove $H_{\ZZ}$ extends to a function on the Grothendieck ring of varieties. We then use this in the following subsection to prove Theorem \ref{thm:HZ-k-equivalence}.

\subsection{$H_\ZZ$ as a function on the Grothendieck ring of varieties}\label{section:checks}

\begin{proposition}\label{checks}
    Let $X$ and $Y$ be smooth projective complex algebraic varieties. Then
    \begin{enumerate}
        \item If $Z\subset X$ is a smooth projective subvariety of codimension $c$, $\pi:\text{Bl}_Z(X)\to X$ is the blow-up of $X$ along $Z$, and $E$ is the exceptional divisor of the blowup, then $$H_{\ZZ}(X)-H_{\ZZ}(Z) = H_{\ZZ}(\text{Bl}_Z(X)) - H_{\ZZ}(E).$$
        \item $H_{\ZZ}(X\times Y) = H_{\ZZ}(X)H_{\ZZ}(Y)$ 
    \end{enumerate}
    In particular, $H_{\ZZ}$ extends to a function
    \[
    H_{\text{vir},\ZZ}\colon K_0(\text{Var}_\CC)\to R.
    \]
\end{proposition}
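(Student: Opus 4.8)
The plan is to establish the two displayed identities (1) and (2) separately, and then invoke the universal property of the Grothendieck ring to conclude that $H_\ZZ$ descends to a well-defined ring homomorphism $H_{\text{vir},\ZZ}$. Recall that $K_0(\text{Var}_\CC)$ is generated as a group by classes $[X]$ of varieties, subject to the scissor (cut-and-paste) relations $[X] = [U] + [Z]$ for $Z \subset X$ closed with open complement $U$, with ring structure given by $[X][Y] = [X \times Y]$. By Bittner's presentation, it suffices to check that a would-be invariant respects the blow-up relation in (1) (for smooth projective centers) together with multiplicativity in (2); these are precisely the relations needed to guarantee that the assignment $[X] \mapsto H_\ZZ(X)$ extends uniquely to a ring homomorphism out of $K_0(\text{Var}_\CC)$. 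So once (1) and (2) are proved, the final sentence follows formally.

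For part (1), the key input is the classical description of the integral cohomology of a blow-up along a smooth projective center $Z \subset X$ of codimension $c$. There is a decomposition
\[
H^i(\text{Bl}_Z(X), \ZZ) \simeq H^i(X, \ZZ) \oplus \bigoplus_{k=1}^{c-1} H^{i-2k}(Z, \ZZ),
\]
and the exceptional divisor $E$ is a $\PP^{c-1}$-bundle over $Z$, so by the projective bundle formula $H^i(E, \ZZ) \simeq \bigoplus_{k=0}^{c-1} H^{i-2k}(Z, \ZZ)$. Subtracting, the contributions of the twisted copies of $H^*(Z,\ZZ)$ cancel degree by degree, leaving $H^i(\text{Bl}_Z X, \ZZ) \ominus H^i(E, \ZZ) = H^i(X, \ZZ) \ominus H^i(Z, \ZZ)$ at the level of virtual (graded) groups. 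I would then argue that this isomorphism of graded groups is compatible with the Hodge structure and with the torsion filtration simultaneously: the cohomology of the blow-up splits as a direct sum of the stated pieces, so both the Hodge numbers $h^{p,q}$ and the torsion invariants $a_{\frakp,i,j}$ are additive across the summands. Concretely, since $a_{\frakp,i,j}$ is computed from $\frakp^j H^i(-)_{\text{tors}} / \frakp^{j+1} H^i(-)_{\text{tors}}$, and these quantities are additive over direct sums of abelian groups, the alternating sums defining $\calT$ and the Hodge part of $H_\ZZ$ each satisfy the same cancellation, giving the desired identity in $R$.

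For part (2), multiplicativity, the essential tool is the Künneth formula over $\ZZ$, which involves a $\Tor$ term:
\[
H^n(X \times Y, \ZZ) \simeq \bigoplus_{i+j=n} H^i(X,\ZZ) \otimes H^j(Y,\ZZ) \oplus \bigoplus_{i+j=n+1} \Tor_1^\ZZ(H^i(X,\ZZ), H^j(Y,\ZZ)).
\]
This is exactly where the care taken in defining the ring $R$ pays off: the relations $ux = vx = tx$ and $tx^2 = tx - x$ are engineered so that multiplying out $H_\ZZ(X) H_\ZZ(Y)$ in $R$ reproduces both the tensor contribution (free-free and free-torsion, tracked by the $u,v,t$ variables) and the $\Tor$ contribution (torsion-torsion, which is what the $x$ variable and its relations encode). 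I would verify this by a direct bookkeeping computation: expand the product into four types of cross-terms according to whether each factor contributes a Hodge monomial $u^p v^q$ or a torsion monomial $s_\frakp r_j t^i x$, and match each type against the corresponding Künneth summand, using that $\Tor_1^\ZZ(\ZZ/\frakp^a, \ZZ/\frakp^b) \simeq \ZZ/\frakp^{\min(a,b)}$ to see why the torsion-torsion products collapse correctly under the relations on $x$.

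I expect part (2) to be the main obstacle. Part (1) is largely a matter of assembling standard blow-up and projective-bundle formulas and observing additivity of the invariants over direct sums, which is formal once those topological inputs are in hand. The genuine difficulty lies in (2): the integral Künneth formula is not simply multiplicative because of the $\Tor$ terms, and the whole point of the ring $R$ is to algebraically encode the way free and torsion parts interact. The delicate step is checking that the product structure of $R$ — in particular the relations governing $x$ — faithfully mirrors the formula $\Tor_1^\ZZ(\ZZ/\frakp^a, \ZZ/\frakp^b) \simeq \ZZ/\frakp^{\min(a,b)}$, including the shift in cohomological degree that the $\Tor$ term introduces (which is why the $t$-grading and the relation $tx^2 = tx - x$ must be calibrated just so). Getting the signs right in the alternating sums and confirming that no spurious cross-terms survive is where the real content of the proof resides.
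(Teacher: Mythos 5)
Your proposal is correct and follows essentially the same route as the paper: part (1) via the blow-up decomposition $H^k(\mathrm{Bl}_Z(X),\ZZ)\simeq H^k(X,\ZZ)\oplus\bigoplus_{i=1}^{c-1}H^{k-2i}(Z,\ZZ)$ together with the projective bundle formula for $E$, part (2) via the integral K\"unneth formula with its $\Tor$ term matched against the cross-terms of the product in $R$ (using additivity and multiplicativity of the $a_{\frakp,i,j}$ and $\Tor_1^\ZZ(\ZZ/\frakp^a,\ZZ/\frakp^b)\simeq\ZZ/\frakp^{\min(a,b)}$, with $ux=vx=tx$ handling free--torsion terms and $tx^2=tx-x$ producing the degree shift for the $\Tor$ contribution), and the final claim via Bittner's presentation of $K_0(\Var_\CC)$, exactly as in the paper. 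You correctly identify the torsion--torsion bookkeeping in (2) as the crux; the paper carries out precisely the expansion you sketch.
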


We first require a preliminary result.

\begin{definition}
Let $A$ be a finitely generated $\ZZ$-module which is torsion. Let
$$a_{\frakp,j}(A) = \text{rk}_{\ZZ/\frakp\ZZ}\left(\frac{\frakp^j A}{\frakp^{j+1}A }\right)$$ for each prime $\frakp$ and each $j\geq 0$. Then $A$ is reconstructed from these invariants as 
\[
A\simeq\bigoplus_{\frakp\text{ prime}}\bigoplus_{j\geq 1}\left(\ZZ/\frakp^j\ZZ\right)^{a_{\frakp,j-1}(A)-a_{\frakp,j}(A)}.
\]
\end{definition}

\begin{proposition}\label{a_facts} Let $\frakp,\frakp'$ be different primes, $r,k,k'\geq0$, and $A,B$ finitely generated torsion $\ZZ$-modules.
\begin{enumerate}
\item $a_{\frakp,j}(A\oplus B) = a_{\frakp,j}(A)+a_{\frakp,j}(B)$
\item $a_{\frakp,j}(A^{\oplus r}) = ra_{\frakp,j}(A)$
\item $a_{\frakp,j}(\ZZ/\frakp'\ZZ) =0$
\item $a_{\frakp,j}(\ZZ/\frakp^k\ZZ) = \begin{cases}
1&\text{if }j<k\\
        0&\text{otherwise}
\end{cases}$
\item $a_{\frakp,j}(\ZZ/\frakp^k\ZZ\otimes \ZZ/\frakp^{k'}\ZZ) = \begin{cases}
1&\text{if }j<\min\{k,k'\}\\
        0&\text{otherwise}
\end{cases}$
\item $a_{\frakp,j}(A\otimes B) = a_{\frakp,j}(A)a_{\frakp,j}(B)$
\end{enumerate}
\end{proposition}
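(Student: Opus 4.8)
The plan is to dispatch the structural identities (1)--(5) by direct computation on cyclic modules and then to deduce the multiplicativity statement (6) by reducing to cyclic $\frakp$-groups via the structure theorem for finitely generated torsion $\ZZ$-modules. For (1) I would use that multiplication by $\frakp$ and formation of quotients both commute with finite direct sums, giving $\frakp^j(A\oplus B)/\frakp^{j+1}(A\oplus B)\simeq (\frakp^jA/\frakp^{j+1}A)\oplus(\frakp^jB/\frakp^{j+1}B)$ as $\ZZ/\frakp\ZZ$-vector spaces; additivity of dimension yields (1), and (2) is its $r$-fold iterate. For (3), since $\frakp$ is a unit modulo $\frakp'$, multiplication by $\frakp$ is an automorphism of $\ZZ/\frakp'\ZZ$, so $\frakp^j(\ZZ/\frakp'\ZZ)=\frakp^{j+1}(\ZZ/\frakp'\ZZ)$ and the successive quotient vanishes; the same argument shows more generally that $a_{\frakp,j}$ is zero on any module supported at a prime other than $\frakp$, a fact I will reuse in (6). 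For (4) I would note that $\frakp^j(\ZZ/\frakp^k\ZZ)$ is cyclic of order $\frakp^{\max(k-j,0)}$, so its quotient by $\frakp^{j+1}(\ZZ/\frakp^k\ZZ)$ has order $\frakp$ exactly when $j<k$ and is trivial otherwise. Item (5) is then immediate from (4) together with the standard identification $\ZZ/\frakp^k\ZZ\otimes_\ZZ\ZZ/\frakp^{k'}\ZZ\simeq\ZZ/\frakp^{\min(k,k')}\ZZ$.

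The substance of the proposition is (6). Applying the structure theorem, I would write the $\frakp$-primary parts of $A$ and $B$ as $\bigoplus_i\ZZ/\frakp^{k_i}\ZZ$ and $\bigoplus_\ell\ZZ/\frakp^{k'_\ell}\ZZ$, expand $A\otimes B$ into a direct sum of tensor products of cyclic modules, and invoke (1) to write $a_{\frakp,j}(A\otimes B)$ as the corresponding sum over these cyclic tensor factors. The remark in (3) discards every summand not supported at $\frakp$, leaving only the terms $\ZZ/\frakp^{k_i}\ZZ\otimes\ZZ/\frakp^{k'_\ell}\ZZ$, each of which contributes $1$ to $a_{\frakp,j}$ precisely when $\min(k_i,k'_\ell)>j$ by (5). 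Hence $a_{\frakp,j}(A\otimes B)=\#\{(i,\ell):\min(k_i,k'_\ell)>j\}$, while (4) and additivity give $a_{\frakp,j}(A)=\#\{i:k_i>j\}$ and likewise for $B$. Since $\min(k_i,k'_\ell)>j$ holds if and only if both $k_i>j$ and $k'_\ell>j$, the indexing set above factors as the product of the two one-variable sets, and the desired equality $a_{\frakp,j}(A\otimes B)=a_{\frakp,j}(A)\,a_{\frakp,j}(B)$ follows.

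The main obstacle here is organizational rather than conceptual: tensor product does not commute with multiplication by $\frakp$, so (6) cannot be established at the level of the quotients $\frakp^jA/\frakp^{j+1}A$ directly. Reducing to cyclic summands is exactly what makes the statement tractable, and the one point requiring care is the bookkeeping that eliminates the mismatched-prime contributions, which is precisely the role played by the generalization of (3).
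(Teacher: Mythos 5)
Your proof is correct and takes essentially the same route as the paper: reduce to the $\frakp$-primary parts via (the generalization of) (3), decompose them into cyclic summands by the structure theorem, apply (5) to each cyclic tensor factor, and note that the count of pairs $(i,\ell)$ with $\min(k_i,k'_\ell)>j$ factors as a product of one-variable counts --- which is exactly the paper's identity $\sum_{j<i\leq k,\,j<i'\leq k'} r_i r'_{i'} = \bigl(\sum_{j<i} r_i\bigr)\bigl(\sum_{j<i'} r'_{i'}\bigr)$. The only cosmetic differences are that the paper records cyclic factors with multiplicities $r_i$ while you index summands individually, and that the paper dismisses (1)--(5) as immediate where you spell them out.
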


\begin{proof}
Items $(1)-(5)$ are immediate from the definition. We now prove $(6)$. By part $(3)$ we only have to look at the $\frakp$-torsion parts of $A$ and $B$. Write
$$A_{\frakp-\text{tors}} = (\ZZ/\frakp\ZZ)^{r_1}\oplus\ldots\oplus(\ZZ/\frakp^k\ZZ)^{r_k}$$
and 
$$B_{\frakp-\text{tors}} = (\ZZ/\frakp\ZZ)^{r'_1}\oplus\ldots\oplus(\ZZ/\frakp^{k'}\ZZ)^{r'_{k'}}.$$
%Without loss of generality let $k\leq k'$. 
Then
\[
(A\otimes B)_{\frakp-\text{tors}} = \bigoplus_{\substack{i\leq k\\ i'\leq k'}}\left(\ZZ/\frakp^i\otimes\ZZ/\frakp^{i'}\right)^{\oplus r_ir'_{i'}}
\]
so that by $(1)$--$(5)$ we see
\[
a_{\frakp,j}(A\otimes B)=\sum_{\substack{j<i\leq k\\ j<i'\leq k'}}r_ir'_{i'}=\left(\sum_{j<i}r_i\right)\left(\sum_{j<i'}r'_{i'}\right)=a_{\frakp,j}(A)a_{\frakp,j}(B).\qedhere
\]
\end{proof}

\begin{proof}[{Proof of Proposition \ref{checks}}]

(1) By Section 7.3.3 of \cite{Voi07}, we see
\begin{equation}\label{cohom_of_blowup}
H^k(\text{Bl}_Z(X),\ZZ)=H^k(X,\ZZ)\oplus\bigoplus_{i=1}^{c-1}H^{k-2i}(Z,\ZZ).
\end{equation}
Furthermore, since $E\to Z$ is a projective bundle $P\to B$ of rank $c$, we have % (a consequence of the Leray-Hirsch Theorem)
\begin{equation}\label{cohom_of_proj_bdle}
H^k(P,\ZZ) = \bigoplus_{i=0}^{c-1}H^{k-2i}(B,\ZZ).
\end{equation}
Hence (1) follows.
%which is useful to us since the exceptional divisor $E$ of the blowup $\text{Bl}_Z(X)$ is a rank $c$ projective bundle over $Z$.

%Let $\calT(X)_k$ denote the part of $\calT(X)$ with $t$-degree $k$. Now Equation \eqref{cohom_of_blowup} implies
%\begin{align*}
%\calT(\text{Bl}_Z(X))_k&=\calT(X)_k+\sum_{i=1}^{c-1}\calT(Z)_{k-2i}\\
%&=\calT(X)_k-\calT(Z)_k+\sum_{i=0}^{c-1}\calT(Z)_{k-2i}\\
%&=\calT(X)_k-\calT(Z)_k+\calT(E)_{k},
%\end{align*}
%with the last equality following from Equation \eqref{cohom_of_proj_bdle}.

%The first part of the proposition can be interpreted as saying that the integral mixed Hodge function is well-defined on the group generated by isomorphism classes of smooth projective varieties subject to the relation that if $Y$ is a smooth projective subvariety of $X$, then $\{X\}-\{Y\} = \{\text{Bl}_Y(X)\} - \{E\}$. This is Bittner's presentation of the Grothendieck group of varieties (Theorem 3.1 of \cite{Bit04}). So $H_{\text{vir},\ZZ}$ is well-defined on the group $K_0(\Var)$ and thus respects the relations defining it in the usual way, i.e. that if $U$ is an open subset of $X$ then $\{X\} = \{U\} + \{X\setminus U\}$.

(2) We begin by claiming that the K\"{u}nneth formula for integral cohomology yields
\begin{align}
  H_{\ZZ}(X\times Y) &= \sum_{p,q\geq 0}\left( \sum_{\substack{ e+f=p\\ g+h=q}}(-1)^{p+q}h^{e,g}(X)h^{f,h}(Y)\right)u^pv^q+\label{firstline}\\
  &\qquad+\sum_{\substack{i,j\geq 0 \\ \frakp\text{ prime}}}(-1)^i\left( \sum_{p+q+c=i}\left(h^{p,q}(X)a_{\frakp,c,j}(Y) + a_{\frakp,c,j}(X)h^{p,q}(Y)\right)\right)s_\frakp r_jt^ix\label{secondline}\\
   &\qquad+\sum_{\substack{i,j\geq 0 \\ \frakp\text{ prime}}}(-1)^i\left(\sum_{\substack{c+d=i}}a_{\frakp,c,j}(X)a_{\frakp,d,j}(Y)+\sum_{c+d=i+1}a_{\frakp,c,j}(X)a_{\frakp,d,j}(Y)\right)s_\frakp r_jt^ix\label{thirdline}
\end{align}

The first line is the contribution from the free parts of the integral cohomology groups so let us focus on the second and third lines. For each $i\geq0$ we have

\begin{align*}
H^i(X\times Y,\ZZ)\simeq&\bigoplus_{c+d=i}\left(H^c(X,\ZZ)\otimes H^d(Y,\ZZ)\right)_{\text{tors}}\oplus
\bigoplus_{c+d=i+1}\text{Tor}_\ZZ(H^c(X,\ZZ), H^d(Y,\ZZ))\\
&\qquad=\bigoplus_{c+d=i}\left(H^c(X,\ZZ)_{\text{free}}\otimes H^d(Y,\ZZ)_{\text{tors}}\right)\oplus \bigoplus_{c+d=i}\left(H^c(X,\ZZ)_{\text{tors}}\otimes H^d(Y,\ZZ)_{\text{free}}\right)\\
&\qquad\qquad\oplus \bigoplus_{c+d=i}\left(H^c(X,\ZZ)_{\text{tors}}\otimes H^d(Y,\ZZ)_{\text{tors}}\right)\oplus \bigoplus_{c+d=i+1}\text{Tor}_\ZZ(H^c(X,\ZZ)_{\text{tors}}, H^d(Y,\ZZ)_{\text{tors}})
\end{align*}

Since $H^c(X,\ZZ)_{\text{free}}\otimes H^d(Y,\ZZ)_{\text{tors}} \simeq  H^d(Y,\ZZ)_{\text{tors}}^{\sum_{p+q=c}h^{p,q}(X)}$, we see by parts $(1)$ and $(2)$ of Proposition \ref{a_facts} that $$a_{\frakp,j}\left(\bigoplus_{c+d=i}\left(H^c(X,\ZZ)_{\text{free}}\otimes H^d(Y,\ZZ)_{\text{tors}}\right)\right)=\sum_{p+q+c=i}(-1)^ih^{p,q}(X)a_{\frakp,d,j}(Y).$$ We have a similar expression for $\bigoplus_{c+d=i}\left(H^c(X,\ZZ)_{\text{tors}}\otimes H^d(Y,\ZZ)_{\text{free}}\right)$. These two sums account for Line \eqref{secondline} above.

Since $\ZZ/n\ZZ\otimes \ZZ/m\ZZ \simeq \ZZ/\text{gcd}(n,m)\ZZ\simeq \text{Tor}_\ZZ(\ZZ/n\ZZ, \ZZ/m\ZZ)$, the contributions to the integral virtual Hodge polynomial coming from $$\bigoplus_{c+d=i}\left(H^c(X,\ZZ)_{\text{tors}}\otimes H^d(Y,\ZZ)_{\text{tors}}\right)$$ and from $$\bigoplus_{c+d=i+1}\text{Tor}_\ZZ(H^c(X,\ZZ)_{\text{tors}}, H^d(Y,\ZZ)_{\text{tors}})$$ will be the same, except that they contribute to different $t$-degree. Part $(6)$ of Proposition \ref{a_facts} then directly accounts for Line \eqref{thirdline}.

Secondly we must simplify \begin{align*}H_{\ZZ}(X)H_{\ZZ}(Y)&=\left( \sum_{\substack{i,j\geq 0 \\ \frakp\text{ prime}}}(-1)^ia_{\frakp,i,j}(X)s_\frakp r_jt^ix+\sum_{p,q\geq0}(-1)^{p+q}h^{p,q}(X)u^pv^q\right)\\
&\qquad\left(\sum_{\substack{i,j\geq 0 \\ \frakp\text{ prime}}}(-1)^ia_{\frakp,i,j}(Y)s_\frakp r_jt^ix+\sum_{p,q\geq0}(-1)^{p+q}h^{p,q}(Y)u^pv^q\right).
\end{align*}
The product of the second terms in the two factors above reads as 
   $$\sum_{p,q\geq 0}\left(\sum_{\substack{e+f=p \\ g+h=q}}(-1)^{p+q}h^{e,g}(X)h^{f,h}(Y)u^{e+f}v^{g+h}\right),$$ 
   which accounts exactly for Line \eqref{firstline}. The coefficient of $t^ix$ in the product on coming from mixing the $a$ and $h$ terms is
 \begin{align*}
     &\sum_{\substack{j,c,p,q\geq 0 \\ \frakp\text{ prime}}}\left( (-1)^{p+q+c}\left(h^{p,q}(X)a_{\frakp,c,j}(Y) + a_{\frakp,c,j}(X)h^{p,q}(Y)\right)s_\frakp r_ju^pv^qt^{c}x\right)\\
     &\qquad = \sum_{\substack{i,j\geq 0 \\ \frakp\text{ prime}}}(-1)^i\left( \sum_{p+q+c=i}\left(h^{p,q}(X)a_{\frakp,c,j}(Y) + a_{\frakp,c,j}(X)h^{p,q}(Y)\right)\right)s_\frakp r_jt^ix
 \end{align*}
which matches Line \eqref{secondline}.

 Lastly we have to inspect the part of the coefficient of $t^ix$ coming from the $a$ terms. Any terms with $\frakp\neq \frakp'$ or $j\neq j'$ vanish, and multiplying $a_{\frakp,c,j}s_\frakp r_jt^cx$ with $a_{\frakp,d,j}s_{\frakp}r_{j}t^dx$ yields 
 \begin{align*}
     &(-1)^{c+d}a_{\frakp,c,j}a_{\frakp,d,j}s_\frakp r_jt^{c+d}x^2\\
     &\qquad = (-1)^{c+d}a_{\frakp,c,j}a_{\frakp,d,j}s_\frakp r_jt^{c+d}x + (-1)^{c+d}a_{\frakp,c,j}a_{\frakp,d,j}s_\frakp r_jt^{c+d-1}x
     \end{align*}
     This accounts for the torsion terms in the Künneth formula and shows that the part of the coefficient of $t^ix$ coming from the $a$ terms is
      \begin{align*}
     &\sum_{\substack{j\geq 0 \\ \frakp\text{ prime}}}\left(\sum_{\substack{c+d=i \\e+f=i+1}}(-1)^i\left(a_{\frakp,c,j}(X)a_{\frakp,d,j}(Y)+a_{\frakp,e,j}(X)a_{\frakp,f,j}(Y)\right)s_\frakp r_jt^{i}x\right)
   \end{align*}
   and this matches Line \eqref{thirdline}.

   For the final claim, use Theorem 3.1 of \cite{Bit04} which states that the abelian group generated by the isomorphism classes of smooth projective
varieties under to the relations $[\varnothing] = 0$ and $[\text{Bl}_Z(X)]-[E] = [X]-[Z]$ is isomorphic to $K_0(\text{Var}_\CC)$.
\end{proof}

\begin{corollary}\label{cutandpaste} If $X$ is a quasi-projective variety and $U \subset X$ is an open subset then
        $$H_{\text{vir},\ZZ}(X) = H_{\text{vir},\ZZ}(U) + H_{\text{vir},\ZZ}(X\setminus U). $$
        \end{corollary}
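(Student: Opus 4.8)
The plan is to deduce this directly from the fact, established in Proposition \ref{checks}, that $H_{\text{vir},\ZZ}$ is a well-defined function on the Grothendieck ring $K_0(\text{Var}_\CC)$. Recall that by definition $K_0(\text{Var}_\CC)$ is the free abelian group on isomorphism classes $[X]$ of complex varieties, modulo the scissor relations $[X] = [Z] + [X\setminus Z]$ for every closed subvariety $Z\subset X$. First I would observe that for an open subset $U\subset X$, the complement $X\setminus U$ is closed and hence carries the structure of a closed subvariety, so the relevant scissor relation reads $[X] = [X\setminus U] + [U]$ in $K_0(\text{Var}_\CC)$.

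Since $H_{\text{vir},\ZZ}$ is additive on $K_0(\text{Var}_\CC)$, applying it to this relation immediately yields
\[
H_{\text{vir},\ZZ}(X) = H_{\text{vir},\ZZ}(X\setminus U) + H_{\text{vir},\ZZ}(U),
\]
which is exactly the claimed identity.

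The one point that requires care---and which I regard as the only genuine content---is reconciling the two presentations of $K_0(\text{Var}_\CC)$ that are in play. Proposition \ref{checks} constructs $H_{\text{vir},\ZZ}$ by verifying the blow-up relation (1) and multiplicativity (2), and then invoking Bittner's Theorem 3.1 of \cite{Bit04}, which identifies the Grothendieck ring with the group generated by smooth projective varieties modulo $[\varnothing]=0$ and $[\text{Bl}_Z(X)]-[E] = [X]-[Z]$. I would emphasize that Bittner's theorem is precisely an \emph{isomorphism} between this blow-up presentation and the standard scissor presentation of $K_0(\text{Var}_\CC)$. Consequently, once $H_{\text{vir},\ZZ}$ is known to factor through $K_0(\text{Var}_\CC)$, it automatically respects every relation holding in that ring---in particular the scissor relations above---even though it was constructed using only smooth projective data. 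Beyond recording this observation, no further computation is needed, so there is no real obstacle here; the corollary is a formal consequence of Proposition \ref{checks}.
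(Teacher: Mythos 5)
Your proposal is correct and matches the paper's own argument: the proof there likewise observes that since $H_{\text{vir},\ZZ}$ is well-defined on $K_0(\text{Var}_\CC)$ by Proposition \ref{checks} (via Bittner's theorem), it automatically respects the scissor relation $[X] = [U] + [X\setminus U]$ from the standard presentation. Your added remark reconciling the blow-up and scissor presentations is exactly the implicit content of the paper's one-line proof, so nothing is missing.
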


        \begin{proof}
        As a result of $H_{\text{vir},\ZZ}$ being well-defined on $K_0(\text{Var}_\CC)$ it respects the relation $[X] = [U]-[X\setminus U]$ which is used in the standard presentation of the Grothendieck group of varieties.
        \end{proof}

        As a quick application of the existence of $H_{\text{vir},\ZZ}$ we reprove the following well-known result:

\begin{proposition}
Let $X$ be a smooth projective algebraic variety which admits an algebraic cell decomposition. Then the integral cohomology groups of $X$ are torsion-free.
\end{proposition}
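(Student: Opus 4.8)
The plan is to exploit the cut-and-paste property of $H_{\text{vir},\ZZ}$ from Corollary \ref{cutandpaste} together with the fact that affine space contributes only ``free'' ($u,v$) terms. Recall that an algebraic cell decomposition provides a filtration by closed subvarieties $\varnothing = X_{-1}\subset X_0\subset\cdots\subset X_N = X$ in which each graded piece $X_k\setminus X_{k-1}$ is a disjoint union of cells, each isomorphic to an affine space $\mathbb{A}^{d}$. Since $X_{k-1}$ is closed in the projective (hence quasi-projective) variety $X_k$, the stratum $X_k\setminus X_{k-1}$ is open, so Corollary \ref{cutandpaste} applies at each stage; iterating it and using additivity over disjoint unions gives
\[
H_{\text{vir},\ZZ}(X) = \sum_{\text{cells }\sigma} H_{\text{vir},\ZZ}(\mathbb{A}^{d_\sigma}),
\]
the sum running over all cells $\sigma$, of respective dimensions $d_\sigma$.

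First I would compute $H_{\text{vir},\ZZ}(\mathbb{A}^n)$. Writing $\mathbb{A}^n = \mathbb{P}^n\setminus\mathbb{P}^{n-1}$ and using that $\mathbb{P}^n$ has $H^{2k}(\mathbb{P}^n,\ZZ)\simeq\ZZ$ of Hodge type $(k,k)$ for $0\leq k\leq n$, vanishing odd cohomology, and no torsion, one reads off $H_\ZZ(\mathbb{P}^n) = \sum_{k=0}^n(uv)^k$ (the sign $(-1)^{k+k}$ is $+1$). Corollary \ref{cutandpaste} then yields
\[
H_{\text{vir},\ZZ}(\mathbb{A}^n) = \sum_{k=0}^n(uv)^k - \sum_{k=0}^{n-1}(uv)^k = (uv)^n.
\]
In particular this element lies in the subring $\ZZ[uv]\subset R$ and involves none of the generators $s_\frakp$, $r_j$, $t$, or $x$.

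Substituting back, $H_{\text{vir},\ZZ}(X)\in\ZZ[uv]$, so the torsion part $\calT(X)$ of $H_\ZZ(X)$ vanishes. Because $X$ is smooth and projective we have $H_\ZZ(X) = H_{\text{vir},\ZZ}(X)$, and since the monomials $s_\frakp r_j t^i x$ are linearly independent from the $u^p v^q$ in $R$ (as recorded in the proof of Proposition \ref{prop:recoverH(X,Z)}), the vanishing of every $s_\frakp r_j t^i x$ coefficient forces $a_{\frakp,i,j}(X) = 0$ for all primes $\frakp$ and all $i,j\geq0$. The reconstruction formula of Proposition \ref{prop:recoverH(X,Z)} then gives $H^i(X,\ZZ)_{\text{tors}}\simeq\bigoplus_{\frakp}\bigoplus_{j\geq1}(\ZZ/\frakp^j\ZZ)^{a_{\frakp,i,j-1}(X) - a_{\frakp,i,j}(X)} = 0$, so the integral cohomology of $X$ is torsion-free.

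The step I expect to require the most care is the iterated application of Corollary \ref{cutandpaste}: one must know that an algebraic cell decomposition genuinely organizes into a filtration by closed subvarieties whose successive differences are disjoint unions of affine cells, so that at each stage the stratum is open in the relevant $X_k$ and the corollary applies. In the $\CC^*$-action setting of \cite{BB73} this is precisely the structure of the attracting-cell decomposition, and once the filtration is in hand the remaining computation is entirely formal.
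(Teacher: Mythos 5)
Your proposal is correct and follows essentially the same route as the paper's own proof: filter $X$ by the cell decomposition, apply Corollary \ref{cutandpaste} to write $\{X\}$ as a sum of classes of affine spaces, compute $H_{\text{vir},\ZZ}(\mathbb{A}^n)=(uv)^n$ from $\mathbb{P}^n\setminus\mathbb{P}^{n-1}$, and conclude via Proposition \ref{prop:recoverH(X,Z)} that all torsion coefficients $a_{\frakp,i,j}(X)$ vanish. Your version merely spells out the $(uv)^n$ calculation and the linear-independence step a bit more explicitly than the paper does.
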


\begin{proof}
By assumption we have a filtration 
\[
X=X_n\supseteq\ldots\supseteq X_0\supseteq X_{-1} = \varnothing
\]
where each $X_i$ is a closed subvariety of $X$ and each $X_i\setminus X_{i-1}$ is a disjoint union of affine spaces. Using the cut-and-paste relations (Corollary \ref{cutandpaste}) one can write $\{X\}$ as a sum of classes of affine spaces. Applying $H_{\text{vir},\ZZ}$ to the class of an affine space yields $H_{\text{vir},\ZZ}(\LL^k) = H_\ZZ(\{\PP^k\})-H_\ZZ(\{\PP^{k-1}\})= H(\{\PP^k\})-H(\{\PP^{k-1}\})$ (where $H(-)$ is the usual Hodge polynomial), implying that $H_\ZZ(X) = H(X)$ and so the integral cohomology of $X$ is torsion-free, by Proposition \ref{prop:recoverH(X,Z)}.
\end{proof}

%\begin{remark}
%Let $X$ be a quasi-projective complex variety. Then $H_{\text{vir},\ZZ}(X)$ %specializes to other common invariants:
%\begin{enumerate}
%\item $H_{\text{vir},\ZZ}(X,0,0,u,v,t,0) = H(X,u,v,t)$, the usual Hodge polynomial.
%\item $H_{\text{vir},\ZZ}(X,0,0,u,v,1,0) = E(X,u,v)$, the $E$-polynomial.
%\item $H_{\text{vir},\ZZ}(X,0,0,1,1,t,0) = \mathcal{P}(X,t)$, the virtual Poincar\'e polynomial.
%\item $H_{\text{vir},\ZZ}(X,0,0,1,1,1,0) = \chi_c(X)$, the compactly supported Euler characteristic.
%\end{enumerate}
%\end{remark}

\subsection{Proof of Theorem \ref{thm:HZ-k-equivalence}}\label{section:HZ-k-equivalence}
To prove Theorem \ref{thm:HZ-k-equivalence}, we need a notion of degree in $R$. Note that the usual $t$-degree is not well-defined in $R$, the issue being that $tx^2-tx=-x$ in $R$ does not have a well-defined degree. As a result, we must be a bit more careful. Also note that considering the original definition in Equation \eqref{H_Z_def}, an element with a coefficient of the form $u^pv^q$ should morally have degree $p+q$.

\begin{definition}
Let $\cB$ be the basis constructed in the proof of Proposition \ref{prop:recoverH(X,Z)}. Let $\alpha\in R$ be written as $\sum_{b\in\cB}n_b b$ with $n_b\in\ZZ$. The \emph{degree} of $\alpha$ is the maximum between the largest sum of powers of $t$, $u$ and $v$ appearing among the $b\in\cB$ for which $n_b\neq0$.
\end{definition}

The following result is immediate from the above definition.

\begin{lemma}
Let $\alpha,\beta\in R$ and $n\in\ZZ$. 
\begin{enumerate}
\item $\deg(\alpha+\beta)\leq \max\{\deg(\alpha),\deg(\beta)\}$
\item $\deg(n\alpha) = \deg(\alpha)$
%\item $\deg_t(\alpha\beta) \leq \deg_t(\alpha) +\deg_t(\beta)$
\end{enumerate}
\end{lemma}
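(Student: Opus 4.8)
The plan is to observe that the entire statement rests on one structural fact: by the proof of Proposition \ref{prop:recoverH(X,Z)}, the set $\cB$ is a genuine $\ZZ$-basis of $R$. Consequently every $\alpha\in R$ has a \emph{unique} expression $\alpha=\sum_{b\in\cB}n_b b$ with all but finitely many $n_b\in\ZZ$ equal to zero, and so the quantity $\deg(\alpha)=\max\{\deg b : n_b\neq 0\}$ is genuinely well-defined, where $\deg b$ denotes the sum of the exponents of $t$, $u$, and $v$ occurring in the monomial $b$ (adopting the convention $\deg 0=-\infty$). With this in hand, both claims reduce to tracking which basis vectors occur with nonzero coefficient in a given element, i.e.\ its support.

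For part (1), I would write $\alpha=\sum_b m_b b$ and $\beta=\sum_b n_b b$, so that the $\ZZ$-module structure gives $\alpha+\beta=\sum_b(m_b+n_b)b$ in the same basis. If $m_b+n_b\neq 0$ then at least one of $m_b$, $n_b$ is nonzero, so the set of basis vectors appearing in $\alpha+\beta$ is contained in the union of those appearing in $\alpha$ and in $\beta$. Maximizing $\deg b$ over a smaller set can only decrease the value, which yields $\deg(\alpha+\beta)\leq\max\{\deg(\alpha),\deg(\beta)\}$ at once.

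For part (2), the relevant point is that $\ZZ$ is an integral domain. Taking $n\neq 0$ (which is plainly the intended reading, since $n=0$ forces $n\alpha=0$), we have $n\alpha=\sum_b(n m_b)b$, and $n m_b\neq 0$ if and only if $m_b\neq 0$. Hence $\alpha$ and $n\alpha$ have exactly the same support, so the maxima defining their degrees coincide and $\deg(n\alpha)=\deg(\alpha)$.

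There is essentially no obstacle to overcome: the sole genuine content is that $\cB$ is a basis, so that the coefficients $n_b$—and therefore $\deg$—are unambiguously defined, and this is furnished by the proof of Proposition \ref{prop:recoverH(X,Z)}. The only place warranting a remark is the degenerate case $n=0$ in part (2), which I would dispatch by restricting to $n\neq 0$ (or by the convention $\deg 0=-\infty$).
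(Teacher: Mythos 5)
Your proof is correct and takes essentially the same approach as the paper: the paper gives no argument at all, declaring the lemma ``immediate from the above definition,'' and your write-up---uniqueness of coefficients in the basis $\cB$, containment of the support of $\alpha+\beta$ in the union of the supports, and preservation of support under multiplication by a nonzero integer since $\ZZ$ is a domain---is exactly the verification that makes this immediacy explicit. Your observation that part (2) requires $n\neq 0$ (or the convention $\deg 0=-\infty$) is a legitimate clarification of a case the paper leaves tacit.
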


%Note in particular the possibility of $\deg_t(\alpha\beta)$ being less than $\deg_t(\alpha) +\deg_t(\beta)$, due to the presence of zero-divisors in $R$.

Denoting the class of the affine line by $\LL$ and using the cut-and-paste relations, one calculates $H_{\text{vir},\ZZ}(\LL) =uv$. Hence the function $H_{\text{vir},\ZZ}$ constructed in Proposition \ref{checks} extends naturally to a map 
\[
K_0(\text{Var}_\CC)[\LL^{-1}]\to R[(uv)^{-1}]
\]
which we continue to denote as $H_{\text{vir},\ZZ}$. Let $\widehat{K_0(\text{Var}_\CC)[\LL^{-1}]}$ be the completion with respect to the \emph{dimension filtration}
\[
\ldots\subseteq F^mK_0(\text{Var}_\CC)[\LL^{-1}]\subseteq F^{m-1}K_0(\text{Var}_\CC)[\LL^{-1}]\subseteq\ldots
\]
where $F^mK_0(\text{Var}_\CC)[\LL^{-1}]$ is the subgroup spanned by classes of the form $[X]\LL^{-i}$ with $\text{dim}X-i\geq m$. Further set 
\[
\overline{K_0(\text{Var}_\CC)[\LL^{-1}]}\subset \widehat{K_0(\text{Var}_\CC)[\LL^{-1}]}
\]
to be the image of $K_0(\text{Var}_\CC)[\LL^{-1}]$. 

\begin{lemma}\label{factors}
    The integral virtual Hodge function $H_{\text{vir},\ZZ}\colon K_0(\text{Var}_\CC)[\LL^{-1}]\to R[(uvt^2)^{-1}]$ factors through $\overline{K_0(\text{Var}_\CC)[\LL^{-1}]}\subset \widehat{K_0(\text{Var}_\CC)[\LL^{-1}]}$.
\end{lemma}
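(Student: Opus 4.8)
The plan is to reduce the lemma to a single vanishing statement and then establish that vanishing by a degree estimate. By definition $\overline{K_0(\text{Var}_\CC)[\LL^{-1}]}$ is the image of $K_0(\text{Var}_\CC)[\LL^{-1}]$ under the canonical completion map $\pi$, whose kernel is exactly $\bigcap_m F^m$. Hence $H_{\text{vir},\ZZ}$ factors through $\overline{K_0(\text{Var}_\CC)[\LL^{-1}]}$ if and only if it annihilates $\bigcap_m F^m$, and it suffices to prove $H_{\text{vir},\ZZ}(\alpha)=0$ whenever $\alpha$ lies in every filtration step. I would prove this by showing that $H_{\text{vir},\ZZ}$ sends the deep filtration pieces to elements of arbitrarily negative degree.

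First I would extend the degree function from $R$ to the localized target ring. Since $H_{\text{vir},\ZZ}(\LL)=uv$ and $\deg(uv)=2$, the natural convention is that the inverse of $\LL$ contributes degree $-2$, so one sets $\deg(\beta(uv)^{-n})=\deg(\beta)-2n$. For this to be well defined one needs the degree on $R$ to be submultiplicative, $\deg(\alpha\beta)\leq\deg(\alpha)+\deg(\beta)$, and needs multiplication by $uv$ to raise degree by exactly $2$. Both are verified on the basis $\cB$ from the proof of Proposition \ref{prop:recoverH(X,Z)}: the relations $tx^2=tx-x$ and $ux=vx=tx$, together with their consequence $uvx=t^2x$, only ever produce terms of equal or strictly smaller degree when a product is rewritten in the basis, so rewriting never raises the degree.

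Next comes the key estimate $\deg H_{\ZZ}(X)\leq 2n$ for $X$ smooth projective of dimension $n$. Indeed $h^{p,q}(X)$ vanishes unless $0\leq p,q\leq n$, so every surviving monomial $u^pv^q$ has degree $p+q\leq 2n$, while $a_{\frakp,i,j}(X)=0$ for $i>2n$ because $H^i(X,\ZZ)$ vanishes there, so every surviving monomial $s_\frakp r_jt^ix$ has degree $i\leq 2n$. Using the blow-up and product relations of Proposition \ref{checks} together with Bittner's presentation, and the fact that resolution, cut-and-paste, and blow-ups never increase dimension, one upgrades this to $\deg H_{\text{vir},\ZZ}([X])\leq 2\dim X$ for arbitrary $X$, whence $\deg H_{\text{vir},\ZZ}([X]\LL^{-i})\leq 2(\dim X-i)$. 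Consequently $H_{\text{vir},\ZZ}$ carries each $F^m$ into elements of degree bounded above by twice the maximal virtual dimension $\dim X-i$ occurring among its generators; since the deep filtration steps consist of classes of arbitrarily negative virtual dimension, this bound tends to $-\infty$. An element of $\bigcap_m F^m$ then has image of degree smaller than every integer, which forces that image to be $0$, so $H_{\text{vir},\ZZ}$ annihilates $\ker\pi$ and factors as claimed.

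I expect the main obstacle to be the second step, namely checking that the degree descends to a well-behaved submultiplicative function on the localization. The relations force $uv$ to act as $t^2$ on the torsion (the $x$) part — indeed $x(uv-t^2)=0$ — so $uv$ is very nearly a zero-divisor, and it is exactly this phenomenon that makes $R[(uvt^2)^{-1}]$, rather than $R[(uv)^{-1}]$, the right place to land; reconciling the localization with the degree is the delicate point, whereas the cohomological vanishing that underlies the estimate $\deg H_{\ZZ}(X)\leq 2\dim X$ is routine.
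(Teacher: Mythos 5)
Your proposal is, in substance, the paper's own proof: reduce to showing that $H_{\text{vir},\ZZ}$ annihilates $\bigcap_m F^m=\ker\bigl(K_0(\text{Var}_\CC)[\LL^{-1}]\to\widehat{K_0(\text{Var}_\CC)[\LL^{-1}]}\bigr)$, bound $\deg H_{\ZZ}(X)\leq 2\dim X$ for $X$ smooth projective (vanishing of $h^{p,q}$ for $p,q>\dim X$ and of $a_{\frakp,i,j}$ for $i>2\dim X$), propagate this through Bittner's presentation to $\deg H_{\text{vir},\ZZ}([X]\LL^{-i})\leq 2(\dim X-i)$, and conclude that an element lying in every filtration step has image of arbitrarily negative degree, hence zero. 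The paper compresses all of this into a few lines, so your added bookkeeping is welcome detail rather than a departure.

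One genuine caution about the step you yourself flag as delicate: verifying that multiplication by $uv$ shifts degree by exactly $2$ \emph{on each basis element of $\cB$} is not sufficient, because cancellation can occur among the images of the basis vectors $x^i$, $i\geq 2$. Concretely, $uvx^2=t^2x-tx$ and $uvx^3=t^2x-2tx+x$, so $uv(x^2-x^3)=tx-x$ raises degree only by $1$, and $uv(x^2-2x^3+x^4)=x^2$ raises it by $0$; consequently the formula $\deg\bigl(\beta(uv)^{-n}\bigr)=\deg\beta-2n$ is not well defined on all of $R[(uv)^{-1}]$. This does not sink the argument, because the image of $H_{\text{vir},\ZZ}$ lies in the subring spanned by the basis elements $u^pv^qt^i$ and $s_\frakp r_jt^ix$: pure powers $x^k$, $k\geq 2$, never arise, since $t^ix\cdot t^jx=t^{i+j}x-t^{i+j-1}x$ whenever $i+j\geq 1$ and the coefficients $a_{\frakp,0,j}$ vanish ($H^0$ is torsion-free). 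On that subring multiplication by $uv$ sends basis elements to \emph{distinct} basis elements ($u^pv^qt^i\mapsto u^{p+1}v^{q+1}t^i$ and $t^ix\mapsto t^{i+2}x$), so no cancellation is possible, the shift is exactly $2$, and your localized degree is well defined there. Restrict your degree function to this subring (equivalently, define it only on the image of $H_{\text{vir},\ZZ}$) and the proof is fully rigorous and coincides with the paper's.
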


\begin{proof}
The proof is essentially the same as in the case of non-integral invariants. We want to show that if $$\alpha\in\text{ker}\left(K_0(\text{Var}_\CC)[\LL^{-1}]\to\widehat{K_0(\text{Var}_\CC)[\LL^{-1}]}\right) = \bigcap_{m\geq 0}F^mK_0(\text{Var}_\CC)[\LL^{-1}]$$
then $H_{\text{vir},\ZZ}(\alpha)=0$. By definition $F^mK_0(\text{Var}_\CC)[\LL^{-1}]$ is generated by classes of the form $\{X\}/\LL^i$ with $\text{dim}X-i\leq -m$. So, $\alpha\in F^mK_0(\text{Var}_\CC)[\LL^{-1}]$ satisfies $\deg H_{\text{vir},\ZZ}(\alpha)\leq 2\text{dim}X-2i\leq -m$ for all $m$ and thus $H_{\text{vir},\ZZ}(\alpha)=0$. 
\end{proof}

We may now turn to the main theorem of this section.

\begin{proof}[{Proof of Theorem \ref{thm:HZ-k-equivalence}}]
By \cite{Kon95} (see also, e.g., Theorem 4.3 in Chapter 6 of \cite{Pop11} for expository notes), we know that we have equality of the classes $\{X\}=\{X'\}$ in $\widehat{K_0(\text{Var}_\CC)[\LL^{-1}]}$. Thus, $\{X\}=\{X'\}$ in $\overline{K_0(\text{Var}_\CC)[\LL^{-1}]}$ and so Lemma \ref{factors} implies $H_{\text{vir},\ZZ}(X)=H_{\text{vir},\ZZ}(X')$.
\end{proof}

\begin{corollary}
Let $X$ be a projective algebraic variety of dimension $n$ with
at worst Gorenstein canonical singularities. If $X$ admits a crepant resolution of singularities $Y\to X$, the integral cohomology groups of $Y$ are independent of the choice of crepant resolution.
\end{corollary}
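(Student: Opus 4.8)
The plan is to deduce the statement from Theorem~\ref{thm:HZ-k-equivalence} by showing that any two crepant resolutions of $X$ are $K$-equivalent, and then invoking the isomorphism of integral cohomology that the theorem provides. So let $f\colon Y\to X$ and $f'\colon Y'\to X$ be two crepant resolutions; both $Y$ and $Y'$ are smooth projective, and crepancy means precisely that $K_Y=f^*K_X$ and $K_{Y'}=(f')^*K_X$. Here the pullbacks are legitimate Cartier divisors because the Gorenstein hypothesis guarantees that $K_X$ is Cartier; this is exactly the role of the Gorenstein assumption.

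First I would build a common resolution compatible over $X$. Since $Y$ and $Y'$ are both birational to $X$, the main component of the fibre product $Y\times_X Y'$ (the closure of the graph of the birational map $Y\dashrightarrow Y'$ over $X$) dominates both $Y$ and $Y'$. Applying Hironaka resolution of singularities, available over $\CC$, I obtain a smooth projective variety $Z$ together with projective birational morphisms $g\colon Z\to Y$ and $g'\colon Z\to Y'$. Because these morphisms factor through the fibre product, they satisfy $f\circ g=f'\circ g'$ by construction; denote this common morphism by $h\colon Z\to X$.

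Next I would carry out the relative canonical computation, which is the heart of the argument. Using crepancy,
\[
K_{Z/Y}=K_Z-g^*K_Y=K_Z-g^*f^*K_X=K_Z-h^*K_X,
\]
and in exactly the same way $K_{Z/Y'}=K_Z-(g')^*(f')^*K_X=K_Z-h^*K_X$. Hence $K_{Z/Y}=K_{Z/Y'}$, which is precisely the definition of $K$-equivalence of $Y$ and $Y'$ witnessed by $Z$. Theorem~\ref{thm:HZ-k-equivalence} then gives $H^i(Y,\ZZ)\simeq H^i(Y',\ZZ)$ for all $i$, so the integral cohomology of a crepant resolution is independent of the choice of resolution.

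The main obstacle is arranging that the two structure morphisms from $Z$ to $X$ agree as genuine morphisms (not merely as rational maps) while keeping $Z$ smooth and projective, since the crepancy cancellation above is only valid once $f\circ g$ and $f'\circ g'$ are literally equal. Working with the fibre product $Y\times_X Y'$ rather than an arbitrary dominating variety is what forces this compatibility over $X$ automatically, and resolving its main component by Hironaka supplies the required smoothness and projectivity. With those two points in place, the computation is immediate and the Gorenstein hypothesis is exactly what makes the divisor pullbacks $f^*K_X$ and $(f')^*K_X$ well-defined.
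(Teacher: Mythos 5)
Your proposal is correct and is precisely the argument the paper intends: the corollary is stated as an immediate consequence of Theorem~\ref{thm:HZ-k-equivalence}, via the standard fact that any two crepant resolutions of a Gorenstein canonical $X$ are $K$-equivalent, which you verify by resolving the graph component of $Y\times_X Y'$ and cancelling $h^*K_X$ in the relative canonical divisors. Your attention to making $f\circ g=f'\circ g'$ hold as actual morphisms (so the pullback computation is legitimate) is exactly the right point to be careful about, and the Gorenstein hypothesis enters just as you say, to make $f^*K_X$ and $(f')^*K_X$ well-defined Cartier pullbacks.
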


\bibliographystyle{alpha}
\bibliography{IntegralMcKaybib.bib}

\end{document}

\section{Stringy integral virtual Hodge Functions}\label{section:stringy}
In this brief section, we define a stringy version of the integral virtual Hodge function. See \cite{CLNS18} for background on motivic integrals and \cite{DL02} for the Gorenstein measure $\mu^{\text{Gor}}$.

\begin{definition}
    Let $X$ be a complex algebraic variety of dimension $n$ with
at worst Gorenstein canonical singularities. The \emph{stringy integral virtual Hodge function} of $X$ is
\begin{equation}\label{d:string-EZ}
    H_{\text{str},\ZZ}(X) := H_{\text{vir},\ZZ}\left(\int_{J_\infty(X)}d\mu^{\text{Gor}}_X\cdot\LL^n\right)
\end{equation}
\end{definition}

\begin{definition}
    For a divisor $D=\sum_{i=1}^ra_iD_i$ with $a_i\neq0$ and any subset $J\subseteq \{1,\ldots,r\}$, define
    \[
    D_J := \begin{cases}
        \bigcap_{j\in J}D_j&\text{if }J\neq\varnothing\\
        Y&\text{if }J=\varnothing
    \end{cases}
    \]
    and
    \[
    D_J^\circ := D_J\setminus\bigcup_{i\in\{1,\ldots,r\}\setminus J}D_i
    \]
\end{definition}

One may then compute the stringy integral virtual Hodge function as follows.

\begin{lemma}\label{l:formula-EZ}
    Let $X$ be a complex algebraic variety of dimension $n$ with
at worst Gorenstein canonical singularities. Let $\phi:Y\to X$ be a resolution
of singularities for which the discrepancy divisor $D =
\sum_{i=1}^r a_iD_i$ has only
simple normal crossings. We have
\begin{align}
    H_{\text{str},\ZZ}(X)&:=H_{\text{vir},\ZZ}\left(\int_{J_\infty(Y)}\LL^{-\text{ord}_D}d\mu\cdot\LL^n\right)\label{old-def}\\
    &=\sum_{J\subset\{1,\ldots,r\}}H_{\text{vir},\ZZ}(D^\circ_J)\left(\prod_{j\in J}\frac{uvt^2-1}{(uvt^2)^{a_j+1}-1}\right)\label{user-friendly}
\end{align}
\end{lemma}
\begin{proof}
By the proof of Theorem 6.28 of \cite{Bat98} (see also, e.g., Theorem 2.16 in \cite{Cra04} for expository notes), we have
\[
\int_{J_\infty(Y)}\LL^{-\text{ord}_D}d\mu = \sum_{J\subset\{1,\ldots,r\}}\{D^\circ_J\}\cdot\left(\prod_{j\in J}\frac{\LL-1}{\LL^{a_j+1}-1}\right)\cdot\LL^{-n}.
\]
Applying $H_{\text{vir},\ZZ}$ and Proposition \ref{checks} finishes the proof.
\end{proof}

We obtain the consequence:

\begin{proposition}\label{prop:EZ-crepant}
    If $Y\to X$ is a crepant resolution of a complex variety $X$ with at worst Gorenstein canonical singularities, then $H_{\text{str},\ZZ}(X) = H_{\text{vir},\ZZ}(Y)$.
\end{proposition}
\begin{proof}
    This is immediate from Equation \eqref{user-friendly} as $D=0$ so all $a_j=0$.
\end{proof}

\section{Stacky integral virtual Hodge Functions}\label{section:stacky}

We can further extend $H_{\text{vir},\ZZ}$ to take inputs in the Grothendieck ring of stacks. Let $K_0(\text{Stack}_k)$ denote the ring generated by isomorphism classes of algebraic stacks of finite type over $k$ with affine stabilizer groups, subject to the relations
\begin{itemize}
\item $\{\cX\} = \{\mathcal{U}\}+\{\mathcal{Y}\}$ for $\mathcal{Y}$ a closed substack of $\cX$ and $\mathcal{U}$ its complement,
\item $\{\cX\times\mathcal{Y}\} = \{\cX\}\{\mathcal{Y}\}$, and
\item $\{\mathcal{V}\} = \{\cX\}\LL^n$ for a vector bundle $\mathcal{V}\to\cX$ of constant rank $n$.
\end{itemize}
It is Theorem 1.2 of \cite{Eke25} that $K_0(\text{Stack}_k)$ is isomorphic to $K_0(\text{Var}_\CC)[\LL^{-1}][\{(\LL^i-1)^{-1}\}_{i\in\NN}]$, so that given a class $\{\cX\}$ in $K_0(\text{Stack}_k)$ there exists a variety $Z$ such that there is an equivalence of motives 
$$\{\cX\} = \frac{\{Z\}}{\LL^j\prod_{k=1}^m{(\LL^{i_k}-1)^{l_k}}}.$$
Now we can define the integral virtual Hodge function for stacks 
$$H_{\text{vir},\ZZ}:K_0(\text{Stack}_k)\to R\left[\frac{1}{uvt^2},\left\{\frac{1}{((uvt^2)^i-1)^{-1}}\right\}_{i\in\NN}\right]=:\mathcal{R}$$
by
$$H_{\text{vir},\ZZ}(\{\cX\}) = H_{\text{vir},\ZZ}\left(\frac{\{Z\}}{\LL^j\prod_{k=1}^m{(\LL^{i_k}-1)^{l_k}}}\right) = \frac{H_{\text{vir},\ZZ}(\{Z\})}{(uvt^2)^j\prod_{k=1}^m{((uvt^2)^{i_k}-1)^{l_k}}}$$

\begin{lemma}\label{factoring}
    The integral virtual Hodge function for stacks $H_{\text{vir},\ZZ}:K_0(\text{Stack}_k)\to \mathcal{R}$ factors through the image $\overline{K_0(\text{Stack}_k)}$ of $K_0(\text{Stack}_k)\to\widehat{K_0(\text{Var}_\CC)[\LL^{-1}]}$.
\end{lemma}

\begin{proof}We want to show that if $$\alpha\in\text{ker}\left(K_0(\text{Stack}_k)\to\widehat{K_0(\text{Var}_\CC)[\LL^{-1}]}\right)$$
then $H_{\text{vir},\ZZ}(\alpha)=0$. Observe that the kernel above is the same as the kernel of the map from $K_0(\text{Var}_\CC)[\LL^{-1}]\subset K_0(\text{Stack}_k)$ to $\widehat{K_0(\text{Var}_\CC)[\LL^{-1}]}$. This is because a factor of the form $(\LL^i-1)^{-1}$ in $K_0(\text{Stack}_k)$ is invertible. % sent to $-\sum_{i=0}^\infty\LL^{-ki}$ in $\widehat{K_0(\text{Var}_\CC)[\LL^{-1}]}$, an automatically nonzero element. 
From the proof of Lemma \ref{factors} we know that any element in the kernel satisfies $H_{\text{vir},\ZZ}(\alpha)=0$, and so the proof is complete.
\end{proof}

\begin{remark}\label{interesting}
    As in the case of non-smooth or non-projective varieties, $H_{\text{vir},\ZZ}(\cX)$ does not necessarily capture the integral cohomology of $\cX$. For example $\{\mathcal{B}(\ZZ/n\ZZ)\}=\{\textrm{pt}\}$, but $H^*(\mathcal{B}(\ZZ/n\ZZ),\ZZ)\neq0$.
\end{remark}

%\begin{theorem}If $\cX$ and $\cX'$ are $K$-equivalent smooth Deligne-Mumford stacks \evan{over $k$} then $H_{\text{vir},\ZZ}(\cX) = H_{\text{vir},\ZZ}(\cX').$\end{theorem}\begin{proof}\end{proof}

%However, this does \emph{not} imply an isomorphism between the integral cohomology groups of $\cX$ and $\cX'$. \evan{Put an explanation here. do we have an example of K-equivalent DM stacks with different integral cohomology groups? Does some sort of */G example work? (doesnt seem so)}

%\section{Mckay Correspondence}

For the remainder of this section, let $X = \CC^d/G$ for a finite subgroup $G\subset \text{SL}_d(\CC)$ and assume that there exists a crepant resolution $Y\to X$. Denote by $(\CC^d)^g$ the fixed point set of $g\in G$, $C_g$ the centralizer of $g$, and $\mathcal{B}C_g$ the classifying stack of $C_g$.

The following result is essentially due to Yasuda (Section 4.2 of \cite{Yas06}) except that it was not shown to hold in the completion of the Grothendieck group of varieties; this result was later generalized to Artin stacks and also shown to hold in the completed Grothendieck group in Theorem 1.6 of \cite{SU24}.

\begin{proposition}\label{mckay_decomp}
$$\int_{J_\infty X}\mu_X^{\text{Gor}} = \{Y\} = \sum_{[g]\in\Conj G}\{\mathcal{B}C_g\}\LL^{d-\age(g)}$$
\end{proposition}

\begin{corollary}
\begin{equation}\label{H_Z_equality}
H_{\text{str},\ZZ}(X) = H_{\text{vir},\ZZ}(Y) = \sum_{[g]\in\Conj G}H_{\text{vir},\ZZ}(\{\mathcal{B}C_g\})(uvt^2)^{d-\age(g)}
\end{equation}
\end{corollary}

\begin{proof}
Apply $H_{\text{vir},\ZZ}(-)$ to the expression in Proposition \ref{mckay_decomp}.
\end{proof}

\begin{corollary}\label{cor:3d-crepant-torsion}
Let $X=\mathbb{C}^d/G$ for a finite subgroup $G\subset \SL_d(\CC)$ with $d\leq 3$. If $\overline{Y}$ is a smooth compactification of a crepant resolution $Y$ of $X$, then $\overline{Y}$ has no torsion in its integral cohomology.
\end{corollary}

\begin{proof}
\evan{say something about hironaka or is this good enough?} Given this setup we have that $\overline{Y}\setminus Y \simeq D$ for a simple normal crossing divisor $D$. We will use inclusion/exclusion to express the class of $D$ in terms of the classes of smooth projective varieties. By writing the components of $D$ as $D'_1,\ldots D'_{k_1}$, the components of $ \bigcup_{i,j=1}^{k_1}\left(D'_i\cap D'_j\right)$ as $D''_1,\ldots D''_{k_2}$, and the components of $\bigcup_{i,j,k=1}^{k_2}\left(D''_i\cap D''_j\cap D''_k\right) $ as $D'''_1,\ldots D'''_{k_3}$,\evan{impose i neq j, etc} we have \begin{equation}\label{incexc}
[D] = \sum_{i=1}^{k_1}[D'_i] - \sum_{i=1}^{k_2}[D''_i] + \sum_{i=1}^{k_3}[D'''_i]
\end{equation}
Now each of the classes on the right hand side of Equation \eqref{incexc} is the class of a smooth projective variety of dimension at most $2$. Such varieties have no torsion in their integral cohomology, \evan{I think the 2d ones could have torsion? eg enriques surfaces} and so $H_{\text{vir},\ZZ}(D) = H(D)$. Now we have
\begin{align*}
H_{\text{vir},\ZZ}(\overline{Y})&=H_{\text{vir},\ZZ}(Y)-H(D) \\ 
&=\sum_{[g]\in\Conj G}H_{\text{vir},\ZZ}(\{\mathcal{B}C_g\})(uvt^2)^{d-\age(g)}-H(D)
\end{align*}

We can now invoke Theorem 2.5 of \cite{Mar16} which states that all finite subgroups $G$ of $GL_3(\CC)$ have $\{\mathcal{B}G\}= \{\ast\}$. In particular, $H_{\text{vir},\ZZ}(Y) = H(Y)$ and so $H_{\text{vir},\ZZ}(\overline{Y}) = H(\overline{Y})$ which tells us that $\overline{Y}$ has no torsion in its integral cohomology.
\end{proof}

\matt{say somewhere that it's known for dim $\leq2$ that no torsion in cohomology of sm proj vars, but in dim 3 it's the first time torsion can show up (CITE MUMFORD TO SEE HOW IT WAS USED TO SHOW NON-RATIONALITY OF SOME CUBIC SURFACES???). But we show that for crepant res we don't get torsion}

\begin{remark}\label{Ekedahl-inv}
Let us also mention here the relationship between the integral mixed Hodge functions and the Ekedahl invariants of finite groups introduced in \cite{Eke09}. Given a finite group $G$ let $V$ be an $n$-dimensional faithful representation of $G$ and let $G$ act component-wise on $V^m$. It is shown in Proposition 3.1 of \cite{Eke25} that the class $\{\mathcal{B} G\}$ of the classifying stack of $G$ is equal to the limit $\lim_{m\to\infty}\{V^m/G\}\LL^{-mn}$ in $\widehat{K_0(\text{Var}_\CC)}$. Moreover in Proposition 4.1 of \cite{Mar14} it is shown that for $m$ large enough, the cohomology groups $H^{2mn-i}(\{V^m/G\},\ZZ)$ stabilize. Hence the \emph{$i$-th Ekedahl invariant} $e_i(G)=H^{2mn-i}(\{V^m/G\},\ZZ)$ is well-defined.

The Ekedahl invariants are capturing the same cohomological information about $\{\mathcal{B} G\}$ that $H_{\text{vir},\ZZ}$ does, except the mixed Hodge structure. Indeed the $i$-th Ekedahl invariant of $G$ can be recovered from $H_{\text{vir},\ZZ}(\{\mathcal{B} G\})$ by choosing $m$ large enough so that $H^{2mn-i}(\{V^m/G\},\ZZ)$ stabilizes and then setting
$e_i(G) = \phi_{2mn-i}\left(H_{\text{vir},\ZZ}(\{V^m/G\})\right)$.
\end{remark}

\matt{this theorem isn't correct I think}

\begin{theorem}(= Theorem \ref{torsion_mckay})
Let $G\subseteq \text{SL}_n(\CC)$ be a finite group and assume that there exists a crepant resolution $Y$ of $X=\CC^n/G$. Then we have
$$H^i(Y,\ZZ) =\bigoplus_{[g]\in\Conj G} \phi_{i+2(\age(g)-d)}\big(H_{\text{vir},\ZZ}(\{\mathcal{B}C_g\})\big).$$
\end{theorem}
where $\phi_i$ is the map taking an element of (an appropriate subring of) $R$ and returning the $\ZZ$-module associated to the $t$-degree $i$ elements, see Definition \ref{def:reconstruction-map}.

Along with this we prove

\begin{corollary}(= Corollary \ref{torsion_condition})
There is torsion in the integral cohomology of $Y$ if and only if at least one of the classes $\{\mathcal{B}C_g\}$ of the classifying stacks of the centralizers of the elements of $G$ has $\phi_i(H_{\text{vir},\ZZ}(\{\mathcal{B}C_g\}))_{\text{tors}}$ nonzero.
\end{corollary}

Finally one is lead to ask the question of whether there are explicit examples of a finite group $G\subseteq SL_n(\CC)$ such that there exists a crepant resolution $Y$ of $X=\CC^n/G$ and such that the classifying stack of some centralizer in $G$ of an element of $G$ has nontrivial torsion under $\phi_iH_{\text{vir},\ZZ}(-)$. We prove 

\begin{corollary}(= Corollary \ref{cor:3d-crepant-torsion})
Let $X=\mathbb{C}^d/G$ for a finite subgroup $G\subset \SL_d(\CC)$ with $d\leq 3$. Then no crepant resolution of $X$ has torsion in its integral cohomology.
\end{corollary}

So for nontriviality we must search higher dimensions and unfortunately we have no such examples. The issues in finding one are twofold: First, crepant resolutions do not necessarily exist (and seem to be fairly uncommon) in dimensions higher than three and we are not aware of any good tools for deciding when such a finite quotient singularity admits a crepant resolution.

Secondly, it is not easy to decide whether the torsion part of $\phi_iH_{\text{vir},\ZZ}(\mathcal{B}G)$ for a finite group $G$ is nontrivial. By Theorem 5.1 of \cite{Eke09} and Remark \ref{Ekedahl-inv} it does suffice to show that Bogomolov multiplier of $G$ is nonzero. Such groups are also rare, with the first examples in characteristic zero being found by Saltman \cite{Sal84} and later \cite{Bog88}. It is Open Problem $(i)$ in \cite{BP07} to find a finite group $G$ which has a nontrivial Bogmolov multiplier and for which $\CC^n/G$ admits a crepant resolution.\\

\begin{proof}

\matt{Matt: I don't follow this proof}

 Given a finite group $G$ let $V$ be an $n$-dimensional faithful representation of $G$ and let $G$ act component-wise on $V^m$. It is shown in Proposition 3.1 of \cite{Eke25} that the class $\{\mathcal{B} G\}$ of the classifying stack of $G$ is equal to the limit $\lim_{m\to\infty}\{V^m/G\}\LL^{-mn}$ in $\widehat{K_0(\text{Var}_\CC)}$. Note here that $\{V^m/G\}$ is the class of the scheme-theoretic quotient, not of the stacky quotient.

 Now calculate
 \begin{align*}
\phi_i(H_{\text{vir},\ZZ}(\{\mathcal{B}G\})) &=\phi_i(H_{\text{vir},\ZZ}(\lim_{m\to\infty}\{V^m/G\}\LL^{-mn}))\\
&=H^{i}(\lim_{m\to\infty}\{V^m/G\}\LL^{-mn})\\
 \end{align*}

where $H^i(-)$ is the map on $\widehat{K_0(\text{Var}_\CC)[\LL^{-1}]}$ which is naturally extended from the one on $K_0(\text{Var}_\CC)[\LL^{-1}]$ which sends $\{X\}\LL^{-j}$ to $H^{i+j}(\{X\},\ZZ)$. In Proposition 4.1 of \emph{op. cit.} it is also shown that for $m$ large enough, the cohomology groups $H^{i+2mn}(\{V^m/G\},\ZZ)$ stabilize and thus $H_{\text{vir},\ZZ}(\{\mathcal{B}G\})\in R^{+}$.
\end{proof}

\section{Orbifold Integral Mixed Hodge Functions}

We include here a brief discussion of integral invariants and orbifold cohomology, which is not used in the rest of the article but belongs here in spirit. 

Let $X$ be a variety with Gorenstein quotient singularities. Then there exists a Deligne-Mumford stack $\cX$ whose coarse moduli space is $X$ and the automorphism groups of the general points of $\cX$ are trivial.

For the study of such varieties Chen and Ruan defined in \cite{} the \emph{orbifold cohomology groups} (with real or rational coefficients) $H^i_\text{orb}(X)  = H^i(I\cX)$. This definition comes from consideration of \evan{will fill this in. something about GW invariants}. The \emph{orbifold Hodge numbers} of $X$ are the Hodge numbers of $H^i(I\cX)$.

Yasuda proved in \cite{Yas03} that if $X$ is a variety with Gorenstein quotient singularities then the stringy Hodge numbers of $X$ are equal to the orbifold Hodge numbers of $X$, and thus if there exists a crepant resolution $Y\to X$ then the orbifold Hodge numbers of $X$ are equal to the usual Hodge numbers of $Y$.

The natural extension of this definition to include integral coefficients does not immediately yield such a nice correspondence. One can naïvely define $H^i_\text{orb}(X,\ZZ)  = H^i(I\cX,\ZZ)$ and so get a notion of orbifold integral virtual Hodge structure (which indeed may be useful), but this is not necessarily related to $H_{\text{str},\ZZ}(X)$. Retracing Yasuda's argument in \cite{Yas03} yields instead that $H_{\text{str},\ZZ}(X) = H_{\text{vir},\ZZ}(I\cX)$. As explained in the previous section we should not expect in general that $H_{\text{vir},\ZZ}(I\cX)$ and the integral virtual Hodge structure of $H^*(I\cX,\ZZ)$ are recoverable from each other. With this in mind it may be most reasonable to use $H_{\text{vir},\ZZ}(I\cX)$ as the integral version of orbifold mixed Hodge structures.

\section{Stringy Stacky Integral Mixed Hodge Functions}

\evan{might not need this section at all, since our goal is a mckay correspondence, cf the next section. I think these theorems are nontrivial as well }
\begin{definition}
    Let $\cY \to \cX$ be a tame proper birational morphism of DM stacks of finite type and pure dimension with $\cY$ smooth and $\cX$ either smooth or a variety. The \emph{stringy integral E-function} of $\cX$ is
\[
    H_{\text{str},\ZZ}(\cX) := H_{\text{vir},\ZZ}\left(\int_{|\mathcal{J}_\infty\cX|}d\mu^{\text{Gor}}_\cX\cdot\LL^n\right)  
\]
\end{definition}
\evan{do we have a gorenstein measure on $|\mathcal{J}_\infty\cX|$?}

\begin{lemma}\label{l:formula-EZ-stack}
    \evan{something like \ref{l:formula-EZ}} 
\[
    H_{\text{str},\ZZ}(\cX)=?
    \]
\end{lemma}
\begin{proof}
 
\end{proof}

\begin{proposition}\label{prop:EZ-crepant}
    If $\cY\to \cX$ is a crepant resolution, $H_{\text{str},\ZZ}(\cX) = H_{\text{vir},\ZZ}(\cY)$.
\end{proposition}
\begin{proof}
   \evan{will follow from \ref{l:formula-EZ-stack} }
\end{proof}

ekedahl stuff:
Let us also mention here the relationship between the integral mixed Hodge functions and the Ekedahl invariants of finite groups introduced in \cite{Eke09}. Given a finite group $G$ let $V$ be an $n$-dimensional faithful representation of $G$ and let $G$ act component-wise on $V^m$. It is shown in Proposition 2.5 of \cite{Mar14} that the class $\{\mathcal{B} G\}$ of the classifying stack of $G$ is equal to the limit $\lim_{m\to\infty}\{V^m/G\}\LL^{-mn}$ in $\widehat{K_0(\text{Var}_\CC)}$. Moreover in Proposition 4.1 of \emph{op. cit.} it is shown that for $m$ large enough, the cohomology groups $H^{2mn-i}(\{V^m/G\},\ZZ)$ stabilize. Hence the \emph{$i$-th Ekedahl invariant} $e_i(G)=H^{2mn-i}(\{V^m/G\},\ZZ)$ is well-defined.

The Ekedahl invariants are capturing the same cohomological information about $\{\mathcal{B} G\}$ as $H_{\text{vir},\ZZ}$ does, except the mixed Hodge structure. Indeed the $i$-th Ekedahl invariant of $G$ can be recovered from $H_{\text{vir},\ZZ}(\{\mathcal{B} G\})$ by choosing $m$ large enough so that $H^{2mn-i}(\{V^m/G\},\ZZ)$ stabilizes and then setting
$e_i(G) = \phi_{2mn-i}\left(H_{\text{vir},\ZZ}(\{V^m/G\})\right)$.

\begin{proposition}\label{prop:EZ-indep-res}
    $H_{\text{str},\ZZ}(X)$ is independent of the resolution $Y$.\matt{not needed if we use gorenstein measure definition}
\end{proposition}
\begin{proof}
    Proposition 2.22 of \cite{Cra04} \evan{where does this argument originate? Kontsevich?} shows that the motivic integrals used in the definition are themselves independent of resolution $\phi:Y\to X$. 
\end{proof}

{{\comment{{\evan{should be taken care of now!

\textbf{Justifying the extension of the map to the completion:} first I think for every element in $R[1/(uv)]$ we want to write it in our basis and declare the degree to be the maximum of $p+q$ such that some monomial has $u^pv^q$ showing up in it. \matt{Is this well-defined?? I think so b/c the only relation that can mess with $u$'s is $ux^2=ux+x$ but both sides still have degree $1$.}  I think what we want to do is look at the filtration
\[
F^mR[(uv)^{-1}]
\]
which is defined as the span of the elements with degree at most $-2m$. Then we have a map from the completion of $K_0(Var)[1/\bL]$ to this other completed ring. This is b/c $E([X])/(uv)^k$ always has correct degree and the new terms we've added do too \matt{provided torsion in integral cohomology always vanishes above $2$ times dim of the variety which I think is true.}

\matt{next big question: can we still recover the coeffs of our polynomial in this completed ring???}
}}}}}

\section{Introduction}

In this paper we prove there is no integral version of the McKay correspondence. We thank Ted Chinburg for asking the following question.

\begin{question}\label{big_question}
If $X\rightarrow Y\leftarrow X'$ are crepant morphisms of \matt{the real question should include ``smooth'' b/c after all singular vars should be looking at stringy invariants} projective varieties over $\Spec\ZZ$, then do we have $|H^*(X,\ZZ)_{\tors}|=|H^*(X',\ZZ)_{\tors}|$?
\end{question}

\begin{example}
Consider the toric variety over $\Spec\ZZ$ whose fan is given by the cone over the cube with vertices $(\pm1,\pm1,\pm1)$, but moved the ray from $(1,1,1)$ to $(1,2,3)$; this example has come up before as an example of a complete toric variety where every Cartier divisor is principal (Cox--Little--Schenck Example 4.2.13). Then if you subdivide the top face by adding the cone $\langle(1,2,3),(-1,-1,1)\rangle$ we have $H^3(X,\ZZ) = \ZZ/20$, but if we instead subdivide the top face by adding the cone $\langle(1,-1,1),(-1,1,1)\rangle$ we have $H^3(X,\ZZ) = \ZZ/3$.

These calculation were performed using the spectral sequence
\[
E_1^{pq}=\bigoplus_{\tau\in\Sigma(n-p)}\wedge^q M(\tau)\Rightarrow H^*(X,\ZZ)
\]
from Cox--Little--Schenck Proposition 12.3.5. From this it follows that
\[
H^3(X,\ZZ)=E_2^{2,1}.
\]
Therefore one need only calculate the cohomology of the complex
\[
\bigoplus_{\tau\in\Sigma(2)}M(\tau)\xrightarrow{A}\bigoplus_{\rho\in\Sigma(1)}M(\rho)\xrightarrow{B} M
\]
in the middle. This was done using Macaulay 2. 

\begin{verbatim}



---perturbed cone over the cube 
---with (1,1,1) replaced by (1,2,3): H^3=Z^1

loadPackage "Complexes"

A=transpose matrix{
{-2,3,-2,-3,0,0,0,0,0,0,0,0,0,0,0,0},
{0,0,1,-1,1,-1,0,0,0,0,0,0,0,0,0,0},
{0,0,0,0,0,1,0,-1,0,0,0,0,0,0,0,0},
{-4,3,0,0,0,0,-4,3,0,0,0,0,0,0,0,0},
{0,0,0,0,0,0,0,0,0,1,0,-1,0,0,0,0},
{0,0,0,0,0,0,0,0,0,0,1,-1,1,-1,0,0},
{0,0,0,0,0,0,0,0,0,0,0,0,0,1,0,-1},
{0,0,0,0,0,0,0,0,-1,1,0,0,0,0,-1,1},
{4,-1,0,0,0,0,0,0,0,0,0,0,-4,-1,0,0},
{0,0,1,0,0,0,0,0,0,0,0,0,0,0,-1,0},
{0,0,0,0,1,0,0,0,-1,0,0,0,0,0,0,0},
{0,0,0,0,0,0,1,0,0,0,-1,0,0,0,0,0}
}

 B=transpose matrix{
 {2,-1,0},
 {3,0,-1},
 {1,1,0},
 {1,0,-1},
 {1,-1,0},
 {1,0,1},
 {1,1,0},
 {1,0,1},
 {1,-1,0},
 {1,0,-1},
 {1,1,0},
 {1,0,-1},
 {1,-1,0},
 {1,0,1},
 {1,1,0},
 {1,0,1}
 }

B*A==0

C=complex{B,A}

prune HH_1(C)




-----add in a new cone 13: H^3 = Z/20

v13 = transpose matrix{{4,-1,0,0,-4,-1,0,0,0,0,0,0,0,0,0,0}}

A13 = A|v13

B*A13

C13=complex{B,A13}

prune HH_1(C13)



-----instead of 13, add in a new cone 24: H^3 = Z/3

v24 = transpose matrix{{0,0,1,0,0,0,-1,0,0,0,0,0,0,0,0,0}}

A24 = A|v24

B*A24

C24=complex{B,A24}

prune HH_1(C24)    
\end{verbatim}

I include a picture of where this came from. The columns are labeled by the $2$-dimensional cones and the rows are labeled by the rays. For each ray we compute a basis of the lattice $M(\rho)$, i.e., lattice points perpendicular to the span of $\rho$. For each column, we compute the unique generator of $M(\sigma)$, e.g., for column $12$, we calculate $5e_1+2e_2-3e_3$ generates $M(\langle(1,2,3),(1,-1,1)\rangle)$. Then $M((1,2,3)))$ has basis $2e_1-e_2$ and $3e_1-e_3$ and we see $5e_1+2e_2-3e_3=-2(2e_1-e_2)+3(3e_1-e_3)$. We also see $M((1,-1,1)))$ has basis $e_1+e_2$ and $e_1-e_3$ and we see $-(5e_1+2e_2-3e_3)=-2(e_1+e_2)-3(e_1-e_3)$; note we took the negative here because this is how the map in the spectral sequence is constructed.

The green labels are for the cone over the cube $(\pm1,\pm1,\pm1)$. The black labels are what happens when you change $(1,1,1)$ to $(1,2,3)$. The final two columns are only included one at a time, not simultaneously, and this is what happens when one subdivides the top face in two different ways.

\includegraphics[width=0.9\textwidth]{picture-integralMcKayCounterexmaple.jpeg}
\end{example}

Now one can debate if this is really a counter-example to Chinburg's question. This gives Y with two crepant maps $X\rightarrow Y\leftarrow X'$ with $H^3(X,\ZZ)_{\tors}\not\cong H^3(X',\ZZ)_{\tors}$, but $X$ and $X'$ are not smooth, so these are only partial resolutions. And in fact, all smooth toric varieties have torsion-free integral cohomology, so we can't find a literal counter-example using toric varieties.

\section{Further notes}

\url{https://mathoverflow.net/questions/22583/why-torsion-is-important-in-cohomology}: In their paper ``Some Elementary Examples of Unirational Varieties Which are Not Rational'', Artin and Mumford show that the torsion in $H^3$
 of a non singular projective 3-fold is a birational invariant. This is great because it gives a cohomological obstruction to rationality (there is no torsion in the cohomology of projective space). They they are able to show that certain conic bundles over rational surfaces are not rational by exhibiting such torsion (their conic bundles are unirational, hence the title). The paper is very nice.

\section{Integral Poincar\'e Functions}

I'm including here an outline of a proof which might show that the answer to Question \ref{big_question} is actually "yes". I'll phrase it as a statement about $K$-equivalent varieties (since any two smooth crepant resolutions are $K$-equivalent)

\begin{claim}\label{integral} If $X$ and $X'$ are $K$-equivalent smooth projective varieties over $\CC$ then $H^i(X,\ZZ)_{\text{tor}} =  H^i(X',\ZZ)_{\text{tor}}$ for all $i$.
\end{claim}

Kontsevich showed that $[X] = [X']$ in (an appropriate completion of) $K_0(\text{Var}_\CC)[\mathbb{L}^{-1}]$ using motivic integration. We will provide a complete invariant for the integral cohomology of a smooth projective variety which respects the operations in $K_0(\text{Var}_\CC)$.

Define \[\mathcal{P}_\ZZ(X) = \sum_{i,j\geq 0, \,p\text{ prime}}a_{p,i,j}(X)s_pt_jw^ix + b_i(X)w^i\]
lying in the ring
\[
R:=\frac{\ZZ[s_p,t_j,x,w: p\text{ prime},\,j\in\NN]}{\langle wx^2 = wx+x, s_ps_p = s_p, t_jt_j = t_j, s_ps_{p'}=t_jt_{j'}=0    \rangle}
\]
by setting 
$a_{p,i,j}(X) = \text{rk}_{\ZZ/p\ZZ}\left(\frac{p^j H^i(X,\ZZ)}{p^{j+1} H^i(X,\ZZ)}\right)$
and
$b_i(X) = \text{rk}_\QQ(\QQ\otimes_\ZZ H^i(X,\ZZ))$.

The integral cohomology groups of $X$ are recovered from $\mathcal{P}_\ZZ(X)$ by 
\[
H^i(X,\ZZ) \cong \ZZ^{b_i(X)}\oplus\bigoplus_{p\text{ prime}}\bigoplus_{j\geq 1}\left(\ZZ/p^j\ZZ\right)^{a_{i,j-1,p}(X)-a_{i,j,p}(X)}.
\] 

In other words, the free part of $H^i(X,\ZZ)$ comes from looking at coefficients of $w^i$ and the torsion part comes from looking at the coefficients of $w^ix$.

Recall that the Künneth formula for integral cohomology reads as
\[
H^i(X\times Y,\ZZ) \cong \bigoplus_{p+q=i}\left(H^p(X,\ZZ)\otimes H^q(Y,\ZZ)\right)\oplus \bigoplus_{p+q=i+1}\text{Tor}_\ZZ(H^p(X,\ZZ), H^q(Y,\ZZ)).
\]
This tells us that if we are trying to calculate $\mathcal{P}_\ZZ(X\times Y)$ from $\mathcal{P}_\ZZ(X)$ and $\mathcal{P}_\ZZ(Y)$, we should expect $a_{p,i,j}(X)$ and $a_{p,i,j}(Y)$ to contribute to both $a_{p,i,j}(X\times Y)$ and to $a_{p,i-1,j}(X\times Y)$. This is the reason for one of the relations defining $R$: the $wx^2 = wx+x$ equation makes the coefficients appear in the right degrees when multiplying. The $s_p$ and $t_j$ equations are a bit more formal, essentially enforcing that the $a_{p,i,j}$ coefficients follow the rules $\ZZ/n\ZZ\otimes \ZZ/m\ZZ = \ZZ/\text{gcd}(n,m)\ZZ$ and $\text{Tor}_\ZZ(\ZZ/n\ZZ, \ZZ/m\ZZ)= \ZZ/\text{gcd}(n,m)\ZZ$

This is all essentially motivation; a proof of Claim \ref{integral} requires the following.
 
\begin{lemma} For $X$ and $Y$ smooth projective varieties we have
$\mathcal{P}_\ZZ(X\times Y) = \mathcal{P}_\ZZ(X)\mathcal{P}_\ZZ(Y)$
\end{lemma}

\begin{proof}
Using the Künneth formula to write down $\cP_\ZZ(X\times Y)$ yields (yikes!)
\begin{align*}
  &\cP_\ZZ(X\times Y) = \sum_{i\geq 0}\Bigg(\sum_{c+d=i}b_c(X)b_d(Y)w^{c+d}+\\
  &\sum_{j\geq 0,\,p\text{ prime}}\sum_{c+d=i}\sum_{e+f=i+1}\bigg(a_{p,c,j}(X)a_{p,d,j}(Y)+a_{p,e,j}(X)a_{p,f,j}(Y)+
 a_{p,c,j}(X)b_d(Y) + a_{p,c,j}(Y)b_d(X)\bigg)s_pt_jw^{c+d}x\Bigg)
\end{align*}

On the other hand, multiplication in $R$ is pretty nontrivial so let us take it one step at a time. The goal is to calculate $$ \mathcal{P}_\ZZ(X)\mathcal{P}_\ZZ(Y)=\sum_{i,j\geq 0, \,p\text{ prime}}\left(a_{p,i,j}(X)s_pt_jw^ix + b_i(X)w^i\right)\sum_{i,j\geq 0, \,p\text{ prime}}\left(a_{p,i,j}(Y)s_pt_jw^ix + b_i(Y)w^i\right).$$

   The part of the product in $w$-degree $i$ coming from the $b$ terms reads as $\sum_{c+d=i}b_c(X)b_d(Y)w^{c+d}$, which is exactly $\cP(X\times Y)$, as expected.

 The coefficient of $w^ix$ in the product on coming from mixing the $a$ and $b$ terms is
 \begin{align}
     &\sum_{j\geq 0,\,p\text{ prime}}\sum_{c+d=i} \left(a_{p,c,j}(X)b_d(Y) + a_{p,c,j}(Y)b_d(X)\right)s_pt_jw^{c+d}x
 \end{align}

 Lastly, we have to inspect the part of the coefficient of $w^ix$ coming from the $a$ terms. Any terms with $p\neq p'$ or $j\neq j'$ vanish, and multiplying $a_{p,c,j}s_pt_jw^cx$ with $a_{p,d,j}s_{p}t_{j}w^dx$ yields 
 \begin{align}
     &a_{p,c,j}a_{p,d,j}s_pt_jw^{c+d}x^2\\
     &\qquad = a_{p,c,j}a_{p,d,j}s_pt_jw^{c+d}x + a_{p,c,j}a_{p,d,j}s_pt_jw^{c+d-1}x
     \end{align}
     This essentially accounts for the torsion terms in the Kunneth formula. This shows that the part of the coefficient of $w^ix$ coming from the $a$ terms is
      \begin{align}
     &\sum_{j\geq 0,\,p\text{ prime}}\sum_{c+d=i}\sum_{e+f=i+1}\left(a_{p,c,j}(X)a_{p,d,j}(Y)+a_{p,e,j}(X)a_{p,f,j}(Y)\right)s_pt_jw^{c+d}x
   \end{align}

\end{proof}

\begin{lemma}
For $X$ a smooth projective variety, $Y\subset X$ a smooth projective subvariety, $\text{Bl}_Y(X)$ the blow-up of $X$ along $Y$, and $E$ the exceptional divisor of the blowup, we have $\cP_\ZZ(X)-\cP_\ZZ(Y) = \cP_\ZZ(\text{Bl}_Y(X)) - \cP_\ZZ(E)$.
\end{lemma}

\begin{proof}
    This follows directly from the formula for the cohomology of a blowup, eg on page 605 of Griffiths-Harris
\end{proof}

\matt{we need to localize the target ring to extend $E_\ZZ$. One thing to check is that after localizing target ring at $E_\ZZ(\LL)$ we can still recover the integral cohomology. SHOULD BE OK b/c coohmology of $\bA^n$ is free.

when we localize, we can naturally extend the def of $E_\ZZ$ as $E_\ZZ([V]/\LL^i):=E_\ZZ([V])/E_\ZZ(\LL)^i$. This is well-defined.}

The previous two lemmas are enough to show that $\cP_\ZZ$ is well-defined on $K'_0(\text{Var}_\CC)$, the ring generated by isomorphism classes of smooth projective varieties subject to the relation that if $Y\subset X$ is a smooth projective subvariety then $[X] - [Y] = [\text{Bl}_Y(X)]-[E]$. This is known to be isomorphic to the usual Grothendieck ring of varieties, and so $\cP_\ZZ$ is well-defined there as well.

Let \[\widehat{K_0(\text{Var}_\CC)[\mathbb{L}^{-1}]} = \lim_{\leftarrow}\frac{K_0(\text{Var}_\CC)[\mathbb{L}^{-1}]}{F^mK_0(\text{Var}_\CC)[\mathbb{L}^{-1}]}\] be the completion of $K_0(\text{Var}_\CC)[\mathbb{L}^{-1}]$ with respect to the filtration defined by setting $F^mK_0(\text{Var}_\CC)[\mathbb{L}^{-1}]$ to be the subgroup spanned by classes $[V]/\mathbb{L}^i$ with $\text{dim}(V)-i \leq -k.$ Further define $\overline{K_0(\text{Var}_\CC)[\mathbb{L}^{-1}]}$ to be the image in $\widehat{K_0(\text{Var}_\CC)[\mathbb{L}^{-1}]}$ of $K_0(\text{Var}_\CC)[\mathbb{L}^{-1}]$.

\begin{lemma}
The invariant $\cP_\ZZ:K_0(\text{Var}_\CC)[\mathbb{L}^{-1}] \to R[w^{-1}]$ factors through $\overline{K_0(\text{Var}_\CC)[\mathbb{L}^{-1}]}$.
\end{lemma}

\begin{proof}
    The usual Poincar\'e polynomial factors this way. That is to say, if $[X]$ is in the kernel of the map from $K_0(\text{Var}_\CC)[\mathbb{L}^{-1}]$ to $\widehat{K_0(\text{Var}_\CC)[\mathbb{L}^{-1}]}$, then $\cP([X])=0$. This implies that $\cP_\ZZ([X])=0$ as well, since there are no smooth projective varieties with trivial rational cohomology groups and nontrivial integral cohomology groups.
\end{proof}

\matt{ ******************************

see Theorem 1.3 and the corollary of \url{https://arxiv.org/pdf/1312.0476} for explicit examples where $[BG]\neq1$. In this case what's $E_\ZZ(BG)$?

alternatively the Saltman example should work too.

Any counter-example to the Noether problem will have $[BG]\neq1$ by Ekedhal (although I don't have a citation).

when $[BG]=1$ then $E_\ZZ(BG)$ is boring since it killed all the torsion info in the integral cohomology.

********************************}

\matt{

Lastly, we should see what this says for Saltman's example since it's interesting and would demonstrate that the classical McKay case doesn't subsume our case.
}

\matt{check out \url{https://sugaku.net/qna/af97d72c-fb4e-4590-aa95-cfa16caff504/} and Saltman's paper}

\evan{turn the rest of the intro into a short remark}
Section \ref{section:stringy} introduces the stringy version of the integral virtual Hodge polynomial. Stringy invariants were first introduced in the form of the stringy E-function in \cite{Bat98} by a non-integral version of the expression in Equation \eqref{user-friendly}. The idea of using the Gorenstein measure of \cite{DL02} as we do to define stringy invariants intrinsically to the variety $X$ is not new, and is done for non-integral invariants in e.g. Section 7.3.4 of \cite{CLNS18}. We find

\begin{proposition}(= Proposition \ref{prop:EZ-crepant})
    If $Y\to X$ is a crepant resolution of a variety $X$ with at worst Gorenstein canonical singularities, then $H_{\text{str},\ZZ}(X) = H_{\text{vir},\ZZ}(Y)$.
\end{proposition}

In Section \ref{section:stacky} we turn to stacks. The integral virtual Hodge polynomial can be extended to the Grothendieck ring of algberaic stacks of finite type over $k$ and with affine stabilizers by using a theorem of Ekedahl \cite{Eke25} which states that $K_0(\text{Stack}_k)$ is isomorphic to $K_0(\text{Var}_\CC)$ localized at both $\LL$ and $(\LL^i-1)$ for all $i$. As such we define, lying in an appropriately extended ring,
\[
H_{\text{vir},\ZZ}(\{\cX\}) = H_{\text{vir},\ZZ}\left(\frac{\{Z\}}{\LL^j\prod_{k=1}^m{(\LL^{i_k}-1)^{l_k}}}\right) = \frac{H_{\text{vir},\ZZ}(\{Z\})}{(uvt^2)^j\prod_{k=1}^m{((uvt^2)^{i_k}-1)^{l_k}}}
\]
where $\{Z\}$ is an element of $K_0(\text{Var}_\CC)$ such that
$$\{\cX\} = \frac{\{Z\}}{\LL^j\prod_{k=1}^m{(\LL^{i_k}-1)^{l_k}}}.$$

This definition comes with the warning that even for a smooth Deligne-Mumford stack $\cX$, the integral cohomology of $\cX$ is not necessarily recoverable from $H_{\text{vir},\ZZ}(\cX)$ (see Remark \ref{interesting}).

Even so, the integral virtual Hodge function of a stack is a useful tool. Also in this section we use to investigate the integral cohomology of crepant resolutions of finite quotient singularities.

\begin{corollary}(=Corollary \ref{cor:3d-crepant-torsion})
Let $X=\mathbb{C}^d/G$ for a finite subgroup $G\subset \SL_d(\CC)$ with $d\leq 3$. If $\overline{Y}$ is a smooth compactification of a crepant resolution $Y$ of $X$, then $\overline{Y}$ has no torsion in its integral cohomology.
\end{corollary}